\theoremstyle{plain}
\newtheorem{theorem}{Theorem}[section]
\newtheorem{lemma}[theorem]{Lemma}
\newtheorem{corollary}[theorem]{Corollary}
\newtheorem{proposition}[theorem]{Proposition}
\theoremstyle{definition}
\newtheorem{definition}[theorem]{Definition}
\newtheorem{example}[theorem]{Example}
\newtheorem{assumption}[theorem]{Assumption}
\theoremstyle{remark}
\newtheorem{remark}{Remark}[section]
\DeclareMathOperator{\co}{co}
\DeclareMathOperator{\dist}{dist}
\DeclareMathOperator*{\minimise}{minimise}
\author{M.V. Dolgopolik\footnote{Institute for Problems in Mechanical Engineering of the Russian Academy of Sciences,
Saint Petersburg, Russia}
\footnote{This work was performed in IPME RAS and supported by the Russian Science Foundation (Grant No. 20-71-10032).}}
\title{Steering exact penalty DCA for nonsmooth DC optimisation problems with equality and inequality
constraints}
\begin{document}

\maketitle

\begin{abstract}
We propose and study a version of the DCA (Difference-of-Convex functions Algorithm) using the $\ell_1$ penalty function
for solving nonsmooth DC optimisation problems with nonsmooth DC equality and inequality constraints. The method employs
an adaptive penalty updating strategy to improve its performance. This strategy is based on the so-called steering exact
penalty methodology and relies on solving some auxiliary convex subproblems to determine a suitable value of the penalty
parameter. We present a detailed convergence analysis of the method and illustrate its practical performance by
applying the method to two nonsmooth discrete optimal control problem.
\end{abstract}

\section{Introduction}

Since the late 1980s DC (Difference of Convex functions) optimisation has been one of the most popular research areas
in nonconvex and nonsmooth optimisation. An interest in this class of problems is predicated on the fact that one can
apply well-developed apparatus of convex analysis and convex optimisation to efficiently solve problems having DC
structure. The possibility to utilise {D}{C} structure of a problem turned out to be especially fruitful in the
nonsmooth case, and a wide variety of methods for minimising nonsmooth DC functions have been developed over the years.
Among them are codifferential methods \cite{BagirovUgon,TorBagKar}, bundle and double bundle methods
\cite{JokiBagirovKarmitsa,JokiBagirov2018,JokiBagirov2020}, methods based on successive DC piecewise-affine
approximations \cite{GaudiosoBagirov}, aggregate subgradient method \cite{BagirovTaheriJoki}, etc.

Perhaps, the most renown method for solving DC optimisation problems is the so-called \textit{DCA}, originally proposed
by Pham Dinh and Souad in \cite{PhamDinh1986} and later on extensively developed and applied to various particular
classes of problems in the works of Le Thi and Pham Dinh et al.
\cite{PhamDinhLeThi96,DinhLeThi1997,PhamDinhLeThi98,LeThiDinh2005,LeThiPhamDinh2014,LeThiPhamDinh2014b}. 
DC optimisation methods closely related to DCA were studied by de Oliveira et al.
\cite{deOliveiraTcheou,deOliveira2019,AckooijDeOliveira2019b,AckooijDeOliveira2019}.
A detailed literature review on DCA and related topics can be found in \cite{LeThiDinh2018,LippBoyd}. 

Despite the abundance of publications on DC optimisation methods, relatively few papers have been devoted to
development of numerical methods for general \textit{constrained} DC optimisation problems, although some specific
classes of constrained DC optimisation problems (such as convex maximisation problems
\cite{PardalosRosen,Enkhbat,Romeijn}, nonconvex quadratic programs
\cite{CarmonDuchi,LeThiDinh98_Quadratic,CambiniSodini,BurerVandebussche}, etc.) have received much attention of
researchers. 

Bundle-type methods for solving nonsmooth DC optimisation problems with inequality constraints were developed in 
\cite{AckooijDemassey,MontonenJoki}. DCA-type methods for solving such problems were studied in
\cite{LeThiPhamDinh2014,LeThiPhamDinh2014b,AckooijDeOliveira2019,LippBoyd}, while such methods for problems with more
general cone constraints (e.g. semidefinite constraints) were considered in \cite{LippBoyd,Dolgopolik_DCcone}. Some of
the methods from the aforementioned papers require a feasible starting point, while in the case when such point is
unknown, the reader is only referred to general exact penalty methods (cf. \cite{AckooijDeOliveira2019} and
\cite[Sect.~16.1]{Bagirov_book}). In turn, when the exact penalty techniques are employed (as in
\cite{LeThiPhamDinh2014,LeThiPhamDinh2014b,LippBoyd,Dolgopolik_DCcone}), only the simplest penalty updating
strategies are usually used, such as the rule to increase the penalty parameter by a constant factor $\rho > 1$ after
each iteration till a feasible point is found. However, as is illustrated by multiple examples in
\cite{ByrdNocedalWaltz,ByrdLopezCalvaNocedal}, a choice of penalty parameter is an important and difficult problem.
Serious effort must be put into developing efficient penalty updating strategies, since an inadequate (either too small
or too large) value of the penalty parameter might significantly slow down the convergence. 

Finally, let us note that very little research on DC optimisation problems with DC \textit{equality} constraints
exists. To the best of the author's knowledge, only in the recent papers by Strekalovsky 
\cite{Strekalovsky2017,Strekalovsky2020} optimisation methods for general DC optimisation problems with both equality
and inequality constraints have been considered.

The main goal of this paper is to present and analyse a DCA-type exact penalty method with an \textit{adaptive} penalty
updating strategy for solving nonsmooth DC optimisation problems with nonsmooth DC equality and inequality
constraints. Namely, we aim at developing an exact penalty method that takes into account information about computed
points to adaptively adjust the penalty parameter in a way that would improve overall convergence of constructed
sequence.

The method presented in this paper is based on the general steering exact penalty methodology developed for sequential
linear/quadratic programming methods for nonlinear programming problems by Byrd et al. in
\cite{ByrdNocedalWaltz,ByrdLopezCalvaNocedal}. A DCA-type method using steering exact penalty rules for constrained
nonsmooth DC optimisation problems was first presented by Strekalovsky in \cite{Strekalovsky2020}. However, both the
description of this method and its convergence analysis in \cite{Strekalovsky2020} contain several inaccuracies (see 
Remark~\ref{rmrk:StrekalovskyMethod} below for more details). In particular, the case when a point computed by the
method is critical for the penalty term (i.e. constraints are, in a sense, degenerate at this point) is left out of
consideration in \cite{Strekalovsky2020}, which might lead to an incorrect behaviour of the method for some practical
problems.

In this paper, we present a detailed discussion of the steering exact penalty methodology in the context of DC
optimisation problems and use it to develop a correct version of the steering exact penalty DCA. In contrast
to \cite{Strekalovsky2020}, we prove the correctness of our method, that is, we prove that the sequence constructed by
the steering exact penalty DCA is correctly defined and each iteration of the method requires solution of a finite
number of convex optimisation subproblems. Furthermore, we present a much more detailed convergence analysis of the
method than in \cite{Strekalovsky2020} and, in particular, provide simple sufficient conditions for the boundedness of
the penalty parameter in the case when there are no equality constraints (in \cite{Strekalovsky2020} the penalty
parameter is assumed to be bounded). 

The paper is organized as follows. Section~\ref{sect:Criticality} contains a discussion of two notions of criticality
for constrained DC optimisation problems, which are needed for a theoretical analysis of the steering exact penalty
DCA. Section~\ref{sect:ExPenDCA_Steering} is devoted to a detailed description of this method and its convergence
analysis. Finally, two numerical examples illustrating performance of the steering exact penalty DCA are given in
Section~\ref{sect:NumericalExamples}.

\section{Critical points of DC optimisation problems}
\label{sect:Criticality}

Throughout this article we study the following constrained nonsmooth DC optimisation problem:
\begin{align*}
  &\minimise \enspace \: f_0(x) = g_0(x) - h_0(x)
  \\
  &\text{subject to} \enspace 
  \begin{aligned}[t]
    f_i(x) &= g_i(x) - h_i(x) \le 0, \quad i \in \mathcal{I}, \qquad \qquad \qquad \qquad (\mathcal{P})
    \\
    f_j(x) &= g_j(x) - h_j(x) = 0, \quad j \in \mathcal{E}, \quad x \in A.
  \end{aligned}
\end{align*}
Here $g_k, h_k \colon \mathbb{R}^d \to \mathbb{R}$, $k \in \{ 0 \} \cup \mathcal{I} \cup \mathcal{E}$, are given convex
functions, $\mathcal{I} = \{ 1, \ldots, \ell \}$ and $\mathcal{E} = \{ \ell + 1, \ldots, m \}$ are finite index sets
(one of which can be empty), and $A \subseteq \mathbb{R}^d$ is a closed convex set.

Before we proceed to a discussion of exact penalty methods for the problem $(\mathcal{P})$, let us first introduce two
notions of criticality for this problem, which are intimately related to optimality conditions for nonsmooth
mathematical programming problems in terms of Demyanov-Rubinov-Polyakova quasidifferentials
\cite{Dolgopolik_SIAM,Dolgopolik_MetricReg}. For a detailed discussion of optimality conditions and criticality for
unconstrained DC optimisation problems see \cite{LeThiDinh2018,AckooijDeOliveira2019,JokiBagirov2020}.

Let $N_A(x) = \{ v \in \mathbb{R}^d \mid \langle v, y - x \rangle \le 0 \: \forall y \in A \}$ be the normal cone to 
the set $A$ at a point $x \in A$. Here $\langle \cdot, \cdot \rangle$ is the inner product in $\mathbb{R}^d$. 

\begin{definition} \label{def:Criticality}
A feasible point $x_*$ of the problem $(\mathcal{P})$ is said to be \textit{critical} for this problem, if there
exist subgradients $v_k \in \partial h_k(x_*)$, $k \in \{ 0 \} \cup \mathcal{I} \cup \mathcal{E}$, and 
$w_j \in \partial g_j(x_*)$, $j \in \mathcal{E}$, and Lagrange multipliers 
$\lambda_i, \underline{\mu}_j, \overline{\mu}_j \ge 0$, $i \in \mathcal{I}$, $j \in \mathcal{E}$, such that
\begin{equation} \label{eq:Optimality}
\begin{split}
  0 \in \partial g_0(x_*) - v_0
  + \sum_{i \in \mathcal{I}} \lambda_i \big( \partial g_i(x_*) - v_i \big)
  &+ \sum_{j \in \mathcal{E}} \underline{\mu}_j \big( \partial g_j(x_*) - v_j \big)
  \\
  &- \sum_{j \in \mathcal{E}} \overline{\mu}_j \big( w_j - \partial h_j(x_*) \big) + N_A(x_*),
\end{split}
\end{equation}
and the complementarity condition $\lambda_i f_i(x_*) = 0$ holds true for all $i \in \mathcal{I}$.
\end{definition}

\begin{remark}
Arguing in a similar way to the proof of \cite[Thm.~3]{Dolgopolik_MetricReg} (see also \cite{Dolgopolik_SIAM}), one
can check that a locally optimal solution $x_*$ of the problem $(\mathcal{P})$ is critical for this problem, provided a
suitable constraint qualification holds at $x_*$. Several such constraint qualifications are discussed in
\cite{Dolgopolik_MetricReg,Dolgopolik_SIAM}.
\end{remark}

Let us point out an almost obvious, yet useful reformulation of the notion of criticality. For any feasible point $x$
denote $\mathcal{I}(x) = \{ i \in \mathcal{I} \mid f_i(x) = 0 \}$.

\begin{lemma} \label{lem:Criticality}
A feasible point $x_*$ is critical for the problem $(\mathcal{P})$ if and only if there exist $c_* > 0$, 
$v_k \in \partial h_k(x_*)$, $k \in \{ 0 \} \cup \mathcal{I} \cup \mathcal{E}$, and $w_j \in \partial g_j(x_*)$, 
$j \in \mathcal{E}$, such that for any $c \ge c_*$ the point $x_*$ is a global minimiser of the convex function
\begin{multline} \label{eq:GlobalPenMajorant}
  Q_c(x) = g_0(x) - \langle v_0, x - x_* \rangle
  + c \sum_{i \in \mathcal{I}} \max\big\{ g_i(x) - h_i(x_*) - \langle v_i, x - x_* \rangle, 0 \big\}
  \\ 
  + c \sum_{j \in \mathcal{E}} \max\big\{ g_j(x) - h_j(x_*) - \langle v_j, x - x_* \rangle, 
  h_j(x) - g_j(x_*) - \langle w_j, x - x_* \rangle \big\}
\end{multline}
on the set $A$.
\end{lemma}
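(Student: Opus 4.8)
The plan is to reduce the global-minimiser assertion to a convex subdifferential inclusion and then match that inclusion term by term with Definition~\ref{def:Criticality}. Since every $g_k$ and $h_k$ is a finite convex function on $\mathbb{R}^d$ and $A$ is convex, the function $Q_c$ is convex, so $x_*$ is a global minimiser of $Q_c$ on $A$ if and only if $0 \in \partial Q_c(x_*) + N_A(x_*)$. First I would compute $\partial Q_c(x_*)$ using the Moreau--Rockafellar sum rule (which holds without any qualification for finite convex functions) together with the rule $\partial \max\{p,q\}(x_*) = \co\big( \partial p(x_*) \cup \partial q(x_*) \big)$, valid whenever $p(x_*) = q(x_*)$. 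Feasibility of $x_*$ is exactly what makes this max rule applicable at each penalty term: for $j \in \mathcal{E}$ both arguments of the $j$-th maximum vanish at $x_*$, while for $i \in \mathcal{I}$ the first argument equals $f_i(x_*) \le 0$, so the maximum with $0$ has both branches active precisely when $i \in \mathcal{I}(x_*)$ and is locally zero (contributing $\{0\}$ to the subdifferential) otherwise.

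Carrying out the computation and using that $\partial g_k(x_*)$ and $\partial h_k(x_*)$ are convex, one finds that $\partial Q_c(x_*)$ is the set of all vectors
\[
  (\xi_0 - v_0) + \sum_{i \in \mathcal{I}(x_*)} \lambda_i (\xi_i - v_i) + \sum_{j \in \mathcal{E}} \big[ \underline{\mu}_j (\eta_j - v_j) + \overline{\mu}_j (\zeta_j - w_j) \big],
\]
where $\xi_k \in \partial g_k(x_*)$, $\eta_j \in \partial g_j(x_*)$, $\zeta_j \in \partial h_j(x_*)$, the inequality multipliers satisfy $0 \le \lambda_i \le c$, and the equality multipliers satisfy $\underline{\mu}_j, \overline{\mu}_j \ge 0$ with $\underline{\mu}_j + \overline{\mu}_j = c$. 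Rewriting $-\overline{\mu}_j\big(w_j - \partial h_j(x_*)\big)$ as $\overline{\mu}_j\big(\partial h_j(x_*) - w_j\big)$, the inclusion $0 \in \partial Q_c(x_*) + N_A(x_*)$ becomes exactly the criticality relation \eqref{eq:Optimality}, \emph{except} for the two side constraints $\lambda_i \le c$ and $\underline{\mu}_j + \overline{\mu}_j = c$. This settles the easy direction immediately: if $x_*$ minimises $Q_c$ for a single $c \ge c_*$, reading off the multipliers yields \eqref{eq:Optimality} with $\lambda_i \ge 0$ and $\underline{\mu}_j, \overline{\mu}_j \ge 0$, and setting $\lambda_i = 0$ for $i \notin \mathcal{I}(x_*)$ supplies the complementarity condition.

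The converse (criticality $\Rightarrow$ minimisation for \emph{every} $c \ge c_*$) is where these two side constraints must be overcome, and this is the main obstacle. Given a criticality certificate, I would take $c_*$ to be any positive number dominating $\max_{i} \lambda_i$ and $\max_{j}(\underline{\mu}_j + \overline{\mu}_j)$, keeping the subgradients $v_k, w_j$ fixed across all $c$; then $\lambda_i \le c$ is automatic for $c \ge c_*$. The equality requirement $\underline{\mu}_j + \overline{\mu}_j = c$ is the delicate point, since the certificate fixes the sum $s_j := \underline{\mu}_j + \overline{\mu}_j$ while the $j$-th equality penalty forces its two multipliers to sum to exactly $c$. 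I would resolve this by redistributing the excess mass $\delta := c - s_j \ge 0$ without altering the vector contribution: replace $\underline{\mu}_j, \overline{\mu}_j$ by $\underline{\mu}_j + \delta/2,\ \overline{\mu}_j + \delta/2$ (which now sum to $c$) and replace $\eta_j, \zeta_j$ by the convex combinations $\big(\underline{\mu}_j \eta_j + (\delta/2) w_j\big)/(\underline{\mu}_j + \delta/2)$ and $\big(\overline{\mu}_j \zeta_j + (\delta/2) v_j\big)/(\overline{\mu}_j + \delta/2)$, needed only when $c > s_j$. The crucial observation is that $w_j \in \partial g_j(x_*)$ lies in the same convex set as $\eta_j$ and $v_j \in \partial h_j(x_*)$ in the same convex set as $\zeta_j$, so the new selections remain admissible; a direct expansion then shows the $j$-th equality contribution is unchanged. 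Hence the criticality vector is reproduced by an element of $\partial Q_c(x_*)$ for every $c \ge c_*$, and convex optimality gives that $x_*$ minimises $Q_c$ on $A$.

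Finally I would record the verifications kept implicit above, which are routine: the convexity of $Q_c$ as a nonnegative combination of $g_0$, affine terms, and maxima of convex functions; the fact that the active/inactive split of the inequality maxima at $x_*$ is governed by the sign of $f_i(x_*)$; and the elementary identity confirming the mass-redistribution step. In this way the entire weight of the argument rests on that redistribution, which reconciles the fixed criticality multipliers with the sum-to-$c$ structure forced by the equality penalties.
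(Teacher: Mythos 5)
Your proof is correct and follows essentially the same route as the paper: reduce the minimisation claim to the inclusion $0 \in \partial Q_c(x_*) + N_A(x_*)$, compute the subdifferential via the sum and max rules, and reconcile the fixed criticality multipliers with the sum-to-$c$ structure of the equality penalties by exploiting the admissible selections $w_j \in \partial g_j(x_*)$ and $v_j \in \partial h_j(x_*)$ --- your explicit mass-redistribution step is precisely the paper's observation that $0 \in \co\{\partial g_j(x_*) - v_j,\ \partial h_j(x_*) - w_j\}$, which lets it relax $\beta_{j1}+\beta_{j2}=1$ to $\le 1$. The only organisational difference is that the paper first disposes of the ``for all $c \ge c_*$'' quantifier by noting that $Q_c(x_*)=g_0(x_*)$ and $Q_c$ is nondecreasing in $c$, so minimality for one $c$ propagates to all larger $c$, whereas you re-verify the inclusion for each $c$; both are sound.
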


\begin{proof}
Observe that $Q_c(x_*) = g_0(x_*)$ for any $c > 0$ due to the feasiblity of $x_*$. In addition, the function $Q_c$ is
nondecreasing in $c$. Therefore, $x_*$ is a point of global minimum of $Q_c$ on the set $A$ for some $c > 0$ if and only
if $x_*$ is a point of global minimum of $Q_t$ on $A$ for any $t \ge c$. Thus, it is sufficient to check that a feasible
point $x_*$ is critical for the problem $(\mathcal{P})$ if and only if there exists $c > 0$ such that $x_*$ is a point
of global minimum of $Q_c$ on $A$.

By the standard optimality conditions, $x_*$ is a point of global minimum of the convex function $Q_c$ on the set $A$
for some $c > 0$ if and only if $0 \in \partial Q_c(x_*) + N_A(x_*)$. In turn, by the standard rules of 
the subdifferential calculus this inclusion is satisfied for some $c > 0$ if and only if 
\begin{align*}
  0 \in \partial g_0(x_*) - v_0 &+ c \sum_{i \in \mathcal{I}(x_*)} \co\big\{ \partial g_i(x_*) - v_i, 0 \big\}
  \\
  &+ c \sum_{j \in \mathcal{E}} \co\big\{ \partial g_j(x_*) - v_j, \partial h_j(x_*) - w_j \big\} + N_A(x_*)
\end{align*}
or, equivalently, if and only if there exist $\alpha_i \in [0, 1]$, 
$i \in \mathcal{I}(x_*)$, and $\beta_{j1}, \beta_{j2} \in [0, 1]$, $j \in \mathcal{E}$, such that 
$\beta_{j1} + \beta_{j2} \le 1$, $j \in \mathcal{E}$, and
\begin{equation} \label{eq:CriticalityIntermediate}
\begin{split}
  0 \in \partial g_0(x_*) - v_0 &+ c \sum_{i \in \mathcal{I}(x_*)} \alpha_i \big( \partial g_i(x_*) - v_i \big)
  \\
  &+ c \sum_{j \in \mathcal{E}} \Big( \beta_{j1} \big( \partial g_j(x_*) - v_j \big) 
  + \beta_{j2} \big( \partial h_j(x_*) - w_j \big) \Big) + N_A(x_*).
\end{split}
\end{equation}
Note that it is sufficient to assume that $\beta_{j1} + \beta_{j2} \le 1$ (instead of $\beta_{j1} + \beta_{j2} = 1$),
since one has $0 \in \co\big\{ \partial g_j(x_*) - v_j, \partial h_j(x_*) - w_j \big\}$.

As is easily seen, inclusion \eqref{eq:CriticalityIntermediate} is satisfied if and only if condition
\eqref{eq:Optimality} holds true for some $\lambda_i, \underline{\mu}_j, \overline{\mu}_j > 0$, $i \in \mathcal{I}$, 
$j \in \mathcal{E}$, satisfying the complementarity condition $\lambda_i f_i(x_*) = 0$ for all $i \in \mathcal{I}$.
Indeed, if inclusion \eqref{eq:CriticalityIntermediate} holds true, then one can define $\lambda_i = c \alpha_i$ for
$i \in \mathcal{I}(x_*)$, $\lambda_i = 0$ for $i \notin \mathcal{I}(x_*)$, $\underline{\mu}_j = c \beta_{j1}$ and
$\overline{\mu}_j = c \beta_{j2}$ for all $j \in \mathcal{E}$. Conversely, if condition \eqref{eq:Optimality} holds
true, then one can set 
$c = \max\{ \lambda_i, \underline{\mu}_j + \overline{\mu}_j \mid i \in \mathcal{I}, j \in \mathcal{E} \}$
and define $\alpha_i = \lambda_i / c$ for all $i \in \mathcal{I}$, $\beta_{j1} = \underline{\mu}_j / c$
and $\beta_{j2} = \overline{\mu}_j / c$ for all $j \in \mathcal{E}$. Thus, $x_*$ is a point of global minimum of
the convex function $Q_c$ on the set $A$ for some $c > 0$ if and only if $x_*$ is a critical point of the problem
$(\mathcal{P})$.
\end{proof}

The lemma above allows one to introduce a natural extension of the notion of criticality from
Definition~\ref{def:Criticality} to the case of infeasible points.

\begin{definition}
A point $x_* \in A$ is called \textit{a generalised critical point} of the problem $(\mathcal{P})$ for a given value 
$c > 0$ of the penalty parameter, if there exist $v_k \in \partial h_k(x_*)$, 
$k \in \{ 0 \} \cup \mathcal{I} \cup \mathcal{E}$, and $w_j \in \partial g_j(x_*)$, $j \in \mathcal{E}$, such that $x_*$
is a globally optimal solution of the convex problem
\[
  \minimise \: Q_c(x) \quad \text{subject to} \quad x \in A,
\]
where $Q_c$ is defined as in \eqref{eq:GlobalPenMajorant}.
\end{definition}

\begin{remark} \label{rmrk:GeneralizedCriticality}
From the previous definition and Lemma~\ref{lem:Criticality} it follows that any critical point of the problem
$(\mathcal{P})$ is a generalised critical point of this problem for any sufficiently large value of the penalty 
parameter. Conversely, any \textit{feasible} generalised critical point of the problem $(\mathcal{P})$ is critical for
this problem. Let us also note that the generalised criticality depends on the choice of the penalty parameter $c > 0$.
In some cases one can escape a generalised critical point by simply changing this parameter. See
\cite[Remark~9]{Dolgopolik_DCcone} for a more detailed discussion.
\end{remark}

\section{Steering exact penalty DCA}
\label{sect:ExPenDCA_Steering}

Being inspired by the steering exact penalty methods for nonlinear programming problems
\cite{ByrdNocedalWaltz,ByrdLopezCalvaNocedal}, we present a new exact penalty DCA-type method for solving constrained DC
optimisation problems. The method uses the standard $\ell_1$ penalty function for the problem $(\mathcal{P})$ and
updates its penalty parameter in essentially the same way as the penalty parameter is updated in the steering exact
penalty methods. This approach to penalty updates is based on solving some auxiliary convex subproblems to determine a
suitable value of the penalty parameter that would ensure balanced progress towards both feasibility and optimality. The
need to potentially solve multiple auxiliary subproblems (which might be computationally expensive) only to find a
suitable value of the penalty parameter might seem redundant and inefficient at first glance. However, such approach to
penalty updates leads to a substantial reduction of overall number of iterations and improved robustness of
corresponding methods (see \cite{ByrdNocedalWaltz,ByrdLopezCalvaNocedal} for the results of numerical experiments for
steering exact penalty versions of some SQP-type methods).

\subsection{A description of the algorithm}

Our aim is to design an exact penalty DCA-type algorithm for solving the problem $(\mathcal{P})$ based on the $\ell_1$
penalty function
\begin{equation} \label{eq:ell1_PenaltyTerm}
  \Phi_c(x) = f_0(x) + c \varphi(x), \quad
  \varphi(x) = \sum_{i \in \mathcal{I}} \max\{ f_i(x), 0 \} + \sum_{j \in \mathcal{E}} |f_j(x)|.
\end{equation}
To this end, we utilise the convex majorant of this function of the form
\begin{align*}
  Q_c(x, y, V) &= g_0(x) - \langle v_0, x - y \rangle 
  + c \Big( \sum_{i \in \mathcal{I}} \max\big\{ g_i(x) - h_i(y) + \langle v_i, x - y \rangle, 0 \big\} 
  \\
  &+ \sum_{j \in \mathcal{E}} \max\big\{ g_j(x) - h_j(y) - \langle v_j, x - y \rangle, 
  h_j(x) - g_j(y) - \langle w_j, x - y \rangle \big\} \Big),
\end{align*}
where $x, y \in \mathbb{R}^d$, $V = (v_0, v_1, \ldots, v_m, w_{\ell + 1}, \ldots, w_m)$, 
$v_k \in \partial h_k(y)$, $k \in \{ 0 \} \cup \mathcal{I} \cup \mathcal{E}$, and $w_j \in \partial g_j(y)$, 
$j \in \mathcal{E}$. By the definition of subgradient
\[
  h_i(x) - h_i(y) \ge \langle v_i, x - y \rangle,
  \quad
  g_j(x) - g_j(y) \ge \langle w_j, x - y \rangle.
\]
for any $i \in \{ 0 \} \cup \mathcal{I} \cup \mathcal{E}$ and $j \in \mathcal{E}$. Therefore for all 
$x, y \in \mathbb{R}^d$ one has
\begin{equation} \label{eq:ConvexMajorant_Steering}
\begin{split}
  Q_c(x, y, V) - h_0(y) &\ge g_0(x) - h_0(x) + c \sum_{i \in \mathcal{I}} \max\{ g_i(x) - h_i(x), 0 \}
  \\
  &+ c \sum_{j \in \mathcal{E}} \max\{ g_j(x) - h_j(x), h_j(x) - g_j(x) \} = \Phi_c(x),
\end{split}
\end{equation}
and, moreover, $Q_c(x, x, V) - h_0(x) = \Phi_c(x)$. Thus, $Q_c(\cdot, y, V) - h_0(y)$ is a global convex majorant of
$\Phi_c(\cdot)$.

Exact penalty DCA-type algorithms are based on consecutively solving the penalty subproblem
\begin{equation} \label{prob:ExactPenaltyConvexMajorant}
  \min_x \enspace Q_c(x, x_n, V_n) \quad \text{subject to} \quad x \in A,
\end{equation}
where $V_n = (v_{n0}, v_{n1}, \ldots, v_{nm}, w_{n (\ell + 1)}, \ldots, w_{nm})$, $v_{nk} \in \partial h_k(x_n)$, 
$k \in \{ 0 \} \cup \mathcal{I} \cup \mathcal{E}$, and $w_{nj} \in \partial g_j(x_n)$, $j \in \mathcal{E}$. Problem 
\eqref{prob:ExactPenaltyConvexMajorant} can be viewed as the $\ell_1$ penalty version of the following linearised convex
problem:
\begin{equation} \label{prob:LinearizedProblem}
\begin{split}
  &\minimise_x \enspace g_0(x) - \langle v_{n0}, x - x_n \rangle 
  \\
  &\text{subject to} \enspace 
  \begin{aligned}[t]
    g_i(x) - h_i(x_n) + \langle v_{ni}, x - x_n \rangle &\le 0, \quad i \in \mathcal{I}, \quad x \in A
    \\
    g_j(x) - h_j(x_n) - \langle v_{nj}, x - x_n \rangle &\le 0, \quad j \in \mathcal{E},
    \\
    h_j(x) - g_j(x_n) - \langle w_{nj}, x - x_n \rangle &\le 0, \quad j \in \mathcal{E}.
  \end{aligned}
\end{split}
\end{equation}
Note, however, that this linearised problem might have an empty feasible region (if $x_n$ is infeasible for the problem
$(\mathcal{P})$) and, therefore, have no optimal solutions, while penalty subproblem
\eqref{prob:ExactPenaltyConvexMajorant} always has an optimal solution, provided the penalty function $\Phi_c$ is
coercive on the set $A$ (see Prp.~\ref{prp:WellPosedness} below).

To determine a suitable value of the penalty parameter $c$ for problem \eqref{prob:ExactPenaltyConvexMajorant}, we will
use essentially the same approach as in the steering exact penalty methods
\cite{ByrdNocedalWaltz,ByrdLopezCalvaNocedal}. Namely, introduce the convex function
\begin{multline} \label{def:LinInfeasMeasure}
  \Gamma(x, x_n, V_n) = 
  \sum_{i \in \mathcal{I}} \max\big\{ g_i(x) - h_i(x_n) + \langle v_{ni}, x - x_n \rangle, 0 \big\} 
  \\
  + \sum_{j \in \mathcal{E}} \max\big\{ g_j(x) - h_j(x_n) - \langle v_{nj}, x - x_n \rangle, 
  h_j(x) - g_j(x_n) - \langle w_{nj}, x - x_n \rangle \big\},
\end{multline}
which can be used as an infeasibility measure for problem \eqref{prob:LinearizedProblem} (and a global convex majorant
of the penalty term $\varphi$), and consider the auxiliary convex feasibility subproblem
\begin{equation} \label{prob:OptimalFeasibility}
  \minimise_x \enspace \Gamma(x, x_n, V_n) \quad \text{subject to} \quad x \in A,
\end{equation}
that allows one to compute the optimal level of feasibility of the linearised convex problem
\eqref{prob:LinearizedProblem}. In particular, if the feasible region of problem \eqref{prob:LinearizedProblem} is
nonempty, then the optimal value of problem \eqref{prob:OptimalFeasibility} is zero;  conversely, if the optimal value
of this problem is zero and it has an optimal solution, then the feasible region of problem
\eqref{prob:LinearizedProblem} is nonempty.

\begin{remark}
Problem \eqref{prob:OptimalFeasibility} can obviously be rewritten as the following equivalent convex programming
problem:
\begin{align*}
  &\minimise_{(x, y, z)} \enspace \sum_{i \in \mathcal{I}} y^{(i)} + \sum_{j \in \mathcal{E}} z^{(j)}
  \\
  &\text{subject to} \enspace 
  \begin{aligned}[t]
    g_i(x) - h_i(x_n) - \langle v_{ni}, x - x_n \rangle &\le y^{(i)}, \quad y^{(i)} \ge 0, \quad i \in \mathcal{I}, 
    \quad x \in A,
    \\
    g_j(x) - h_j(x_n) - \langle v_{nj}, x - x_n \rangle &\le z^{(j)}, \quad j \in \mathcal{E},
    \\
    h_j(x) - g_j(x_n) - \langle w_{nj}, x - x_n \rangle &\le z^{(j)}, \quad j \in \mathcal{E}.
  \end{aligned}
\end{align*}
Note that in the case of DC optimisation problems with reverse convex constraints (i.e. $\mathcal{E} = \emptyset$ and 
$g_i(\cdot) \equiv 0$ for all $i \in \mathcal{I}$; see \cite{HillestadJacobsen}) this is just a linear programming
problem. In the case when all constraints of the problem $(\mathcal{P})$ are quadratic, this problem is a convex
quadratically constrained linear programming problem, which can be efficiently solved with the use of interior point
methods. There are many other particular cases in which this problem can be solved fairly efficiently.
\end{remark}

Following the steering exact penalty methodology \cite{ByrdNocedalWaltz,ByrdLopezCalvaNocedal}, one can formulate 
the following general guidelines for updating the penalty parameter:
\begin{enumerate}
\item{If the optimal value of problem \eqref{prob:OptimalFeasibility} is zero, then choose $c > 0$ large enough to make
sure that an optimal solution of the penalty subproblem \eqref{prob:ExactPenaltyConvexMajorant} is feasible
for the linearised problem \eqref{prob:LinearizedProblem}.
}

\item{If the optimal value of problem \eqref{prob:OptimalFeasibility} is positive (i.e. the feasible region of problem
\eqref{prob:LinearizedProblem} is empty), then choose $c > 0$ in such a way that the reduction of the infeasibility
measure $\Gamma(\cdot, x_n, V_n)$ is proportional to the best possible reduction computed via problem
\eqref{prob:OptimalFeasibility}, that is, 
\begin{equation} \label{eq:FeasibilityDecay_Guideline}
  \Gamma(x_n(c), x_n, V_n) - \Gamma(x_n, x_n, V_n) 
  \le \eta_1 \Big( \Gamma(\widehat{x}_n, x_n, V_n) - \Gamma(x_n, x_n, V_n) \Big),
\end{equation}
where $x_n(c)$ is an optimal solution of problem \eqref{prob:ExactPenaltyConvexMajorant}, $\widehat{x}_n$ is an optimal
solution of problem \eqref{prob:OptimalFeasibility}, and $\eta_1 \in (0, 1)$ is a fixed parameter.
}

\item{Finally, if condition \eqref{eq:FeasibilityDecay_Guideline} is satisfied and the reduction of the infeasibility
measure $\Gamma(x_n(c), x_n, V_n) - \Gamma(x_n, x_n, V_n)$ is large, then the penalty parameter $c > 0$ must be chosen
to ensure that the reduction of the penalty function $Q_c$ for the linearised problem \eqref{prob:LinearizedProblem} is
sufficiently large as well. We impose this requirement in the form of the following inequality:
\[
  Q_c(x_n(c), x_n, V_n) - Q_c(x_n, x_n, V_n) 
  \le \eta_2 c \Big( \Gamma(x_n(c), x_n, V_n) - \Gamma(x_n, x_n, V_n) \Big).
\]
where $\eta_2 \in (0, 1)$ is a fixed parameter.
}
\end{enumerate}
The guidelines for updating the penalty parameter listed above are just reformulations of the same guidelines for
SQP-type exact penalty methods for nonlinear programming problems from \cite{ByrdNocedalWaltz,ByrdLopezCalvaNocedal} to
the case of an exact penalty DCA for DC optimisation problems. Note, however, that these guidelines cannot be
implemented directly in the context of DC optimisation problems (especially problems with equality constraints).
Indeed, if the optimal value of problem \eqref{prob:OptimalFeasibility} is zero, then in the general case there might
not exist $c > 0$ such that an optimal solution of the penalty subproblem \eqref{prob:ExactPenaltyConvexMajorant} is
feasible for the linearised problem \eqref{prob:LinearizedProblem}. Similarly, 
if $\Gamma(\widehat{x}_n, x_n, V_n) = \Gamma(x_n, x_n, V_n)$, then in the general case there might not exists $c > 0$
for which inequality \eqref{eq:FeasibilityDecay_Guideline} is satisfied. Therefore we must modify the guidelines to
make sure that they can be applied in the context of DC optimisation problems.

To develop a correct version of steering exact penalty DCA, we propose to base penalty updates not on the optimal value
of the feasibility subproblem \eqref{prob:OptimalFeasibility} (i.e. on the fact whether the feasible region of the
linearised problem \eqref{prob:LinearizedProblem} is empty or not), but on the difference between the infeasibility
measure $\Gamma(x_n, x_n, V_n) = \varphi(x_n)$ of the current iterate $x_n$ and the optimal value of the infeasibility
measure $\Gamma(\widehat{x}_n, x_n, V_n)$. 

Namely, if $\Gamma(\widehat{x}_n, x_n, V_n) = \Gamma(x_n, x_n, V_n)$ (in particular, if the point $x_n$ is feasible for
the problem $(\mathcal{P})$), then one must find $c > 0$ such that the infeasibility measure $\Gamma(x_n(c), x_n, V_n)$
is sufficiently close to the optimal value of the infeasibility measure $\Gamma(\widehat{x}_n, x_n, V_n)$. In turn, if
$\Gamma(\widehat{x}_n, x_n, V_n) < \Gamma(x_n, x_n, V_n)$, then one must find $c > 0$ satisfying inequality 
\eqref{eq:FeasibilityDecay_Guideline}. Bearing in mind these guidelines we arrive at the following version of 
the steering exact penalty DCA given in Algorithmic Pattern~\ref{alg:SteeringPenalty}.

\begin{algorithm}[ht!]	\label{alg:SteeringPenalty}
\caption{Steering Exact Penalty DCA}

\noindent\textbf{Initialization.} {Choose an initial guess $x_0 \in A$, an initial value of the penalty parameter 
$c_0 > 0$, parameters $\varepsilon_{feas} > 0$, and $\eta_1, \eta_2 \in (0, 1)$, and set $n := 0$.
}

\noindent\textbf{Step 1.} {Put $c_+ = c_n$. For all $k \in \{ 0 \} \cup \mathcal{I} \cup \mathcal{E}$ compute 
$v_{nk} \in \partial h_k(x_n)$, for all $j \in \mathcal{E}$ compute $w_{nj} \in \partial g_j(x_n)$, and define
\[
  V_n = (v_{n0}, v_{n1}, \ldots, v_{n m}, w_{n (\ell + 1)}, \ldots, w_{nm}).
\]
Compute a solution $x_n(c_+)$ of the convex problem
\begin{equation} \label{prob:StPen_PenaltySubproblem}
  \minimise_x \: Q_c(x, x_n, V_n) \quad \text{subject to} \quad x \in A
\end{equation}
with $c = c_+$. If $\Gamma(x_n(c_+), x_n, V_n) = 0$ (i.e. $x_n(c_+)$ is feasible for the linearised problem
\eqref{prob:LinearizedProblem}), go to \textbf{Step 4}.
}

\noindent\textbf{Step 2.} {Compute a solution $\widehat{x}_n$ of the optimal feasibility subproblem
\begin{equation} \label{prob:StPen_FeasibilitySubproblem}
  \min_x \: \Gamma(x, x_n, V_n) \quad \text{subject to} \quad x \in A.
\end{equation}
If $\Gamma(\widehat{x}_n, x_n, V_n) < \Gamma(x_n, x_n, V_n)$, go to \textbf{Step 3}. Otherwise,
$x_n$ is a critical point of the penalty term $\varphi$. \textbf{While} the inequality
\[
  \Gamma(x_n(c_+), x_n, V_n) \le \Gamma(\widehat{x}_n, x_n, V_n) + \varepsilon_{feas}
\]
is \textbf{not} satisfied, increase $c_+$ and compute a solution $x_n(c_+)$ of the problem
\eqref{prob:StPen_PenaltySubproblem} with $c = c_+$. Once the inequality is satisfied, go to \textbf{Step 4}.
}

\noindent\textbf{Step 3.} {\textbf{While} the inequality
\begin{equation} \label{eq:FeasibilityRecuction}
  \Gamma(x_n(c_+), x_n, V_n) - \Gamma(x_n, x_n, V_n) 
  \le \eta_1 \Big[ \Gamma(\widehat{x}_n, x_n, V_n) - \Gamma(x_n, x_n, V_n) \Big].
\end{equation}
is \textbf{not} satisfied, increase $c_+$ and compute a solution $x_n(c_+)$ of the problem
\eqref{prob:StPen_PenaltySubproblem} with $c = c_+$. Once inequality \eqref{eq:FeasibilityRecuction} is satisfied, 
go to \textbf{Step 4}.
}

\noindent\textbf{Step 4.} {Put $c_{n + 1} = c_+$. \textbf{While} the condition
\begin{equation} \label{eq:FeasReduction_PenReduction_Updated}
\begin{split}
  Q_{c_{n + 1}}(x_n(c_{n + 1}), x_n, V_n) &- Q_{c_{n + 1}}(x_n, x_n, V_n) 
  \\
  &\le \eta_2 c_{n + 1} \Big[ \Gamma(x_n(c_{n + 1}), x_n, V_n) - \Gamma(x_n, x_n, V_n) \Big]
\end{split}
\end{equation} 
is \textbf{not} satisfied, increase $c_{n + 1}$ and compute a solution $x_n(c_{n + 1})$ of the problem 
\eqref{prob:StPen_PenaltySubproblem} with $c = c_{n + 1}$. Once inequality \eqref{eq:FeasReduction_PenReduction_Updated}
is satisfied, put $x_{n + 1} = x_n(c_{n + 1})$. If a \textbf{stopping criterion} is not satisfied, set $n = n + 1$ and
go to \textbf{Step~1}.
}
\end{algorithm}

\subsection{A discussion of the method}

Let us comment on the steering exact penalty DCA. Firstly, note that one might need to solve the penalty subproblem
\eqref{prob:StPen_PenaltySubproblem} multiple times with increasing values of the penalty parameter in order to find
$c_+$ on Steps 2 and 3, and $c_{n + 1}$ on Step 4. These updated values of the penalty parameter can be computed by
increasing the current value of the penalty parameter by a constant factor $\rho > 1$ (say, $\rho = 10$) and recomputing
a solution $x_n(c_+)$ of the penalty subproblem \eqref{prob:StPen_PenaltySubproblem} for the increased value of the
penalty parameter. If new solution $x_n(c_+)$ satisfies the required condition, then one proceeds to the next step.
Otherwise, the value of the penalty parameter is increased again, till the conditions are satisfied. Below we will show
that under some natural assumptions one can always find $c_+ \ge c_n$ (and $c_{n + 1} \ge c_+$) satisfying these
conditions, which implies that on each iteration of Algorithmic Pattern~\ref{alg:SteeringPenalty} the penalty subproblem
\eqref{prob:StPen_PenaltySubproblem} is solved only a finite number of times.

In the best case, the penalty subproblem \eqref{prob:StPen_PenaltySubproblem} is solved only once per iteration. If
a solution of this problem is feasible for the linearised problem \eqref{prob:LinearizedProblem} and satisfies 
inequality \eqref{eq:FeasReduction_PenReduction_Updated} with $c_{n + 1} = c_n$, then the value of the penalty parameter
$c_n$ is adequate, one sets $x_{n + 1} = x_n(c_n)$ and moves to the next iteration. Note that inequality
\eqref{eq:FeasReduction_PenReduction_Updated} is satisfied automatically, e.g. if $x_n$ is feasible for the original
problem $(\mathcal{P})$ and $x_n(c_n)$ is feasible for the linearised problem \eqref{prob:LinearizedProblem} (in this
case the right-hand side of this inequality is nonnegative, while the left-hand side if nonpositive by the definition of
$x_n(\cdot)$). Let us also note that $x_n(c)$ is feasible for the linearised problem \eqref{prob:LinearizedProblem},
provided the penalty function $Q_c(\cdot, x_n, V_n)$ is \textit{exact} for the linearised problem. Sufficient conditions
for the exactness of this penalty functions in the case of inequality constrained and more general cone constrained DC
optimisation problems can be found in \cite[Sect.~4.5]{Dolgopolik_DCcone}.

However, in the general case on every iteration of the method one has to solve the optimal feasibility problem
\eqref{prob:StPen_FeasibilitySubproblem} once and solve the penalty subproblem \eqref{prob:StPen_PenaltySubproblem} with
increasing values of the penalty parameter multiple times in order to find an adequate value of the penalty parameter.
Benefits of this approach in the context of various SQP-type optimisation method were discussed in details and
illustrated by multiple numerical examples in \cite{ByrdNocedalWaltz,ByrdLopezCalvaNocedal}.

Let us note that before computing $\widehat{x}_n$ on Step~2 it is recommended to first compute the value 
$\Gamma(x_n, x_n, V_n)$. If $\Gamma(x_n, x_n, V_n) = 0$, i.e. if $x_n$ is feasible for the problem $(\mathcal{P})$,
then the optimal value of problem \eqref{prob:StPen_FeasibilitySubproblem} is zero and one can define 
$\widehat{x}_n = x_n$. This way, in some cases one can save time by not solving problem
\eqref{prob:StPen_FeasibilitySubproblem}.

\begin{remark}
As we will show below (see Theorem~\ref{thrm:Correctness}), all steps of Algorithmic Pattern~\ref{alg:SteeringPenalty}
are correctly defined and, at least in theory, the required values of the penalty parameter $c_+$ and $c_{n + 1}$ can
always be found. Nevertheless, for a practical implementation of this algorithmic pattern it seems advisable to replace
the inequality $\Gamma(\widehat{x}_n, x_n, V_n) < \Gamma(x_n, x_n, V_n)$ on Step~2 with the inequality
\[
  \Gamma(\widehat{x}_n, x_n, V_n) < \Gamma(x_n, x_n, V_n) - \varepsilon
\] 
for some small $\varepsilon > 0$. Such replacement might help one to avoid an unnecessary increase of the penalty
parameter caused, in particular, by computational errors.
\end{remark}

\begin{remark}
From the convergence analysis of the method presented below (see Theorem~\ref{thrm:SteeringExPen_Criticality}) it
follows that one can use the following inequalities
\[
  \big| \Phi_{c_n}(x_{n + 1}) - \Phi_{c_n}(x_n) \big| < \varepsilon_f \quad 
  \Big( \text{and/or } \| x_{n + 1} - x_n \| < \varepsilon_x \Big), \quad 
  \varphi(x_{n + 1}) < \varepsilon_{\varphi}
\]
with some prespecified $\varepsilon_f > 0$, $\varepsilon_x > 0$, and $\varepsilon_{\varphi} > 0$ as a stopping criterion
for Algorithmic Pattern~\ref{alg:SteeringPenalty}. According to this criterion one terminates the algorithm, if 
the decrease of the value of the penalty function $\Phi_{c_n}$ is sufficiently small (and/or the difference between two
successive iterates is sufficiently small), and the current iterate $x_{n + 1}$ satisfies the constraints with
prespecified tolerance $\varepsilon_{\varphi}$. Note, however, that an undesirable situation when the method gets stuck
at an infeasible generalised critical point (or an infeasible critical point of the penalty term $\varphi$) is possible.
Therefore, a practical implementation of Algorithmic Pattern~\ref{alg:SteeringPenalty} must contain a suitable
safeguard, which would ensure that the method works properly in the described situation. Namely, if for some 
$n \in \mathbb{N}$ one has
\[
  f_0(x_{n + k + 1}) \approx f_0(x_{n + k}), \quad \varphi(x_{n + k + 1}) \approx \varphi(x_{n + k}) \quad
  \forall k \in \{ 0, 1, \ldots, s \}
\] 
for some fixed $s \in \mathbb{N}$, but the inequality $\varphi(x_{n + s + 1}) < \varepsilon_{\varphi}$ is not satisfied,
then one should either restart the algorithm with a different initial guess $x_0$ or try computing different 
subgradients of the functions $g_k$ and $h_j$, if these functions are nonsmooth at the last computed point.
\end{remark}

\begin{remark} \label{rmrk:StrekalovskyMethod}
A method for solving nonsmooth DC optimisation problems with DC equality and inequality constraints similar to
Algorithmic Pattern~\ref{alg:SteeringPenalty} was studied in the recent paper \cite{Strekalovsky2020}. There are
multiple small technical differences between these methods (e.g. the fact that the $\ell_{\infty}$ penalty term is
used in \cite{Strekalovsky2020} for inequality constraints), which we do not discuss here for the sake of shortness. By
far the main difference between them consists in the fact that the case when 
$\Gamma(\widehat{x}_n, x_n, V_n) = \Gamma(x_n, x_n, V_n)$ (i.e. $x_n$ is a critical point of the penalty term $\varphi$;
see Def.~\ref{def:PenTermCriticality} below) is left out of consideration in the method from \cite{Strekalovsky2020},
which might lead to an incorrect behaviour of this method in some cases and makes the convergence analysis presented in
\cite{Strekalovsky2020} incorrect as well. In particular, there might not exist $\sigma_+ > \sigma_k$ satisfying
\cite[inequality~(7.15)]{Strekalovsky2020}. For example, such $\sigma_+$ does not exist for the problem
\[
  \min \enspace (x_1 - 1)^2 + (x_2 - 1)^2 \quad \text{subject to} \quad x_1^2 - x_2^2 = 0,
\]
if $x^k = (0, 0)^T$ in \cite[Algorithm~7.1]{Strekalovsky2020}.

Let us also point out that the convergence analysis from \cite{Strekalovsky2020} contains several mistakes, which lead
to some erroneous conclusions about the method. In particular, the equality 
$\lim_{k \to \infty} \| x^k - x^{k + 1} \| = 0$ (see \cite[Prp.~7.2]{Strekalovsky2020}) does not imply that $\{ x^k \}$
is a Cauchy sequence and this sequence is convergent, as the example of the sequence $x^k = \sum_{r = 1}^k 1/r$
demonstrates. The convergence of the dual variables $\{ y^k \}$ (see \cite[formula~(7.39)]{Strekalovsky2020}) does not
follow from \cite[Prp.~7.3]{Strekalovsky2020} for the same reason.
\end{remark}

\subsection{Correctness of the method}

In this section we analyse correctness of Algorithmic Pattern~\ref{alg:SteeringPenalty}. In particular, we show that
the conditions on Steps 2--4 of this algorithmic pattern are satisfied for any sufficiently large value of the penalty
parameter, i.e. the subproblems of finding the required values of $c_+$ on Steps 2 and 3 and $c_{n + 1}$ on Step 4 are
always solvable. 

Note at first that if the set $A$ is unbounded, then the function $Q_c(\cdot, x_n, V_n)$ might not attain a global
minimum on the set $A$ for some $n \in \mathbb{N}$, that is, the point $x_n(c_+)$ on Step~1 of Algorithmic
Pattern~\ref{alg:SteeringPenalty} might be undefined. Therefore, hereinafter we suppose that the following assumption
holds true.

\begin{assumption}
For all $n \in \mathbb{N}$ problem \eqref{prob:StPen_PenaltySubproblem} with any $c \ge c_n$ and problem
\eqref{prob:StPen_FeasibilitySubproblem} have optimal solutions, so that all auxiliary optimisation subproblems in
Algorithmic Pattern~\ref{alg:SteeringPenalty} are correctly defined.
\end{assumption}

This assumption is obviously satisfied, when the set $A$ is bounded. Let us provide simple sufficient conditions for 
the validity of this assumption in the case when the set $A$ is unbounded. Recall that a function
$F \colon \mathbb{R}^d \to \mathbb{R}$ is called \textit{coercive} on the set $A$, if $F(x_n) \to + \infty$ as 
$n \to \infty$ for any sequence $\{ x_n \} \subset A$ such that $\| x_n \| \to + \infty$ as $n \to \infty$.

\begin{proposition} \label{prp:WellPosedness}
The following statements hold true:
\begin{enumerate}
\item{if the penalty term 
$\varphi(\cdot) = \sum_{i \in \mathcal{I}} \max\{ f_i(\cdot), 0 \} + \sum_{j \in \mathcal{E}} |f_j(\cdot)|$ is coercive
on $A$, then for any $n \in \mathbb{N}$ problem \eqref{prob:StPen_FeasibilitySubproblem} has globally optimal solutions;
}

\item{if the penalty function $\Phi_{c_0}$ is coercive on $A$, then for any $n \in \mathbb{N}$ and $c \ge c_n$  
the penalty subproblem \eqref{prob:StPen_PenaltySubproblem} has globally optimal solutions.
}
\end{enumerate}
\end{proposition}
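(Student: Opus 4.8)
The plan is to prove both statements by exploiting the key fact that the objective functions of the two subproblems are, in each case, global convex majorants of coercive functions, and that coercivity of a majorant forces coercivity of the minorant together with the existence of a global minimiser. Recall that a proper, lower semicontinuous, coercive convex function attains its infimum on any nonempty closed convex set; since all the functions appearing here are finite-valued convex functions on $\mathbb{R}^d$ (hence continuous), the only thing that needs verification in each case is coercivity on $A$.

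For the first statement, I would start from the majorisation inequality already recorded in the excerpt, namely that $\Gamma(x, x_n, V_n) \ge \varphi(x)$ for all $x \in \mathbb{R}^d$. Indeed, the terms of $\Gamma$ are obtained from those of $\varphi$ by replacing each $h_k(x)$ (respectively $g_j(x)$) with its affine minorant $h_k(x_n) + \langle v_{nk}, x - x_n\rangle$ (respectively $g_j(x_n) + \langle w_{nj}, x - x_n\rangle$) coming from the subgradient inequality, and each such replacement can only increase the corresponding $\max$ term. Hence if $\varphi$ is coercive on $A$, so is $\Gamma(\cdot, x_n, V_n)$ on $A$, and consequently this continuous convex function attains its global minimum on the nonempty closed convex set $A$; this gives a solution $\widehat{x}_n$ of problem \eqref{prob:StPen_FeasibilitySubproblem} for every $n$.

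For the second statement, I would argue analogously using inequality \eqref{eq:ConvexMajorant_Steering}, which states $Q_c(x, x_n, V_n) - h_0(x_n) \ge \Phi_c(x)$ for all $x$. The subtlety here is that coercivity is assumed only for $\Phi_{c_0}$, whereas we need it for $\Phi_c$ with arbitrary $c \ge c_n \ge c_0$. This is handled by writing $\Phi_c = \Phi_{c_0} + (c - c_0)\varphi$ and observing that $\varphi \ge 0$ on all of $\mathbb{R}^d$; therefore $\Phi_c(x) \ge \Phi_{c_0}(x)$ whenever $c \ge c_0$, so coercivity of $\Phi_{c_0}$ on $A$ propagates to coercivity of every $\Phi_c$ with $c \ge c_0$, and in particular with $c \ge c_n$ (since the sequence $\{c_n\}$ is nondecreasing by construction, $c_n \ge c_0$). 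Combining this with the majorant inequality, the function $Q_c(\cdot, x_n, V_n) = \big(Q_c(\cdot, x_n, V_n) - h_0(x_n)\big) + h_0(x_n)$ differs from a coercive function only by the additive constant $h_0(x_n)$ and hence is itself coercive on $A$; being a finite-valued convex function it then attains its global minimum on $A$, which yields a solution of problem \eqref{prob:StPen_PenaltySubproblem}.

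The only step requiring genuine care is the termwise majorisation verification, i.e. checking that each $\max$ term of $\Gamma$ (and of the penalty part of $Q_c$) dominates the corresponding term of $\varphi$ pointwise; once the direction of the subgradient inequalities is pinned down this is routine, and it has in fact already been carried out in deriving \eqref{eq:ConvexMajorant_Steering}. I do not anticipate any real obstacle beyond recording the monotonicity $\Phi_c \ge \Phi_{c_0}$ and invoking the standard fact that a finite convex coercive function on $\mathbb{R}^d$ attains its minimum over a nonempty closed convex set.
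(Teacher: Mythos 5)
Your proof is correct and follows essentially the same route as the paper: both arguments reduce to the observation that $\Gamma(\cdot,x_n,V_n)$ and $Q_c(\cdot,x_n,V_n)-h_0(x_n)$ are global convex majorants of $\varphi$ and $\Phi_c$ respectively, so coercivity transfers and a finite-valued convex coercive function attains its minimum on the closed set $A$. The only addition is that you spell out the monotonicity $\Phi_c \ge \Phi_{c_0}$ for $c \ge c_0$ (via $\varphi \ge 0$), a detail the paper leaves implicit in the phrase ``arguing in precisely the same way.''
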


\begin{proof}
From the definition of $\Gamma(\cdot, x_n, V_n)$ it follows that $\Gamma(x, x_n, V_n) \ge \varphi(x)$ for all 
$x \in \mathbb{R}^d$ (see~\eqref{def:LinInfeasMeasure} and \eqref{eq:ell1_PenaltyTerm}). Therefore, the function
$\Gamma(\cdot, x_n, V_n)$ is coercive on $A$, which implies that it attains a global minimum on this set, i.e. the point
$\widehat{x}_n$ on Step~2 of Algorithmic Pattern~\ref{alg:SteeringPenalty} is correctly defined for all 
$n \in \mathbb{N}$. The validity of the second statement of the proposition is proved by applying inequalities
\eqref{eq:ConvexMajorant_Steering} and arguing in precisely the same way.
\end{proof}

Next we prove a useful auxiliary result stating, in particular, that the function $c \mapsto \Gamma(x_n(c), x_n, V_n)$
is monotone. This result is important for implementation of Algorithmic Pattern~\ref{alg:SteeringPenalty}, since it
implies that an increase of the penalty parameter $c$ results in a decrease of the infeasibility measure 
$\Gamma(x_n(c), x_n, V_n)$.

\begin{lemma} \label{lem:PenTerm_Obj_Monotone}
For any $n \in \mathbb{N}$ the following statements hold true:
\begin{enumerate}
\item{the function $c \mapsto \Gamma(x_n(c), x_n, V_n)$ is non-increasing;}

\item{$\Gamma(x_n(c), x_n, V_n) \to \Gamma(\widehat{x}_n, x_n, V_n)$ as $c \to + \infty$;}

\item{the function $c \mapsto g_0(x_n(c)) - \langle v_{n0}, x_n(c) \rangle$ is non-decreasing.}
\end{enumerate}
\end{lemma}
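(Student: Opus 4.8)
The plan is to exploit the fact that, once $n$ is fixed, the penalty subproblem objective splits as $Q_c(x, x_n, V_n) = \psi(x) + c\,\gamma(x)$, where $\psi(x) := g_0(x) - \langle v_{n0}, x - x_n \rangle$ and $\gamma(x) := \Gamma(x, x_n, V_n)$ are both convex functions on $A$ (see~\eqref{def:LinInfeasMeasure}), and $x_n(c)$ is a minimiser of $\psi + c\gamma$ over $A$. Since $g_0(x_n(c)) - \langle v_{n0}, x_n(c) \rangle = \psi(x_n(c)) - \langle v_{n0}, x_n \rangle$ and the term $\langle v_{n0}, x_n \rangle$ does not depend on $c$, statement~3 is equivalent to the monotonicity of $c \mapsto \psi(x_n(c))$. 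Thus all three claims reduce to standard facts about the parametric family of convex problems $\min_{x \in A}(\psi(x) + c\gamma(x))$.

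For statements~1 and~3 I would fix $c_1 < c_2$, set $x_i := x_n(c_i)$, and write the two optimality inequalities $\psi(x_1) + c_1\gamma(x_1) \le \psi(x_2) + c_1\gamma(x_2)$ and $\psi(x_2) + c_2\gamma(x_2) \le \psi(x_1) + c_2\gamma(x_1)$. Adding them and cancelling $\psi(x_1) + \psi(x_2)$ yields $(c_1 - c_2)\big(\gamma(x_1) - \gamma(x_2)\big) \le 0$, whence $\gamma(x_1) \ge \gamma(x_2)$ because $c_1 < c_2$; this is statement~1. Feeding $\gamma(x_1) \ge \gamma(x_2)$ back into the first optimality inequality gives $\psi(x_1) - \psi(x_2) \le c_1\big(\gamma(x_2) - \gamma(x_1)\big) \le 0$, i.e.\ $\psi(x_1) \le \psi(x_2)$, which is statement~3.

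For statement~2, write $\gamma^* := \gamma(\widehat{x}_n) = \min_{x \in A}\gamma(x)$, so that $\gamma(x_n(c)) \ge \gamma^*$ for every $c$. By statement~1 the map $c \mapsto \gamma(x_n(c))$ is non-increasing and bounded below by $\gamma^*$, hence it converges to some $L \ge \gamma^*$. To show $L = \gamma^*$, I would test optimality of $x_n(c)$ against $\widehat{x}_n$, obtaining $\psi(x_n(c)) + c\gamma(x_n(c)) \le \psi(\widehat{x}_n) + c\gamma^*$, which rearranges to $c\big(\gamma(x_n(c)) - \gamma^*\big) \le \psi(\widehat{x}_n) - \psi(x_n(c))$. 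Fixing any $\underline{c} \le c$ (say $\underline{c} = c_n$) and invoking statement~3 to get $\psi(x_n(c)) \ge \psi(x_n(\underline{c}))$, the right-hand side is bounded above by the $c$-independent constant $M := \psi(\widehat{x}_n) - \psi(x_n(\underline{c}))$. Therefore $0 \le \gamma(x_n(c)) - \gamma^* \le M/c \to 0$ as $c \to +\infty$, and the squeeze gives $\gamma(x_n(c)) \to \gamma^*$.

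The only delicate point is that the minimisers $x_n(c)$ and $\widehat{x}_n$ need not be unique, so a priori the quantities $\gamma(x_n(c))$ and $\psi(x_n(c))$ could depend on the particular solution selected. The argument above is, however, selection-independent: the exchange inequalities used in the second and third paragraphs hold for any choice of minimisers, so the asserted monotonicity is valid for whatever solutions the algorithm produces. Existence of all the minimisers involved is guaranteed by the standing assumption (and Proposition~\ref{prp:WellPosedness}), so I do not expect any further obstacle.
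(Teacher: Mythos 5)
Your proposal is correct and follows essentially the same route as the paper: the exchange of optimality inequalities for statements 1 and 3 is identical to the argument in the text, and your treatment of statement 2 (testing $x_n(c)$ against $\widehat{x}_n$ and using statement 3 to bound $\psi(\widehat{x}_n)-\psi(x_n(c))$ uniformly in $c$) is the same idea the paper presents, merely phrased as a direct $M/c$ squeeze rather than a reductio ad absurdum. Your closing remark on selection-independence of the minimisers is a sensible point that the paper leaves implicit.
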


\begin{proof}
Fix any $t > c > 0$. By definition $Q_c(x_n(c), x_n, V_n) \le Q_c(x_n(t), x_n, V_n)$ (recall that $x_n(c)$ is a point
of global minimum of the function $Q_c(\cdot, x_n, V_n)$ on the set $A$), which implies that
\begin{multline} \label{eq:PenTermDecrease1}
  g_0(x_n(c)) - g_0(x_n(t)) - \langle v_{n0}, x_n(c) - x_n(t) \rangle 
  \\
  \le  c \Big[ \Gamma(x_n(t), x_n, V_n) - \Gamma(x_n(c), x_n, V_n) \Big].
\end{multline}
Similarly, from the inequality $Q_t(x_n(t), x_n, V_n) \le Q_t(x_n(c), x_n, V_n)$ it follows that
\begin{multline} \label{eq:PenTermDecrease2}
  g_0(x_n(t)) - g_0(x_n(c)) - \langle v_{n0}, x_n(t) - x_n(c) \rangle 
  \\
  \le t \Big[ \Gamma(x_n(c), x_n, V_n) - \Gamma(x_n(t), x_n, V_n) \Big].
\end{multline}
Summing up these two inequalities one obtains that
\[
  (t - c) \Big[ \Gamma(x_n(c), x_n, V_n) - \Gamma(x_n(t), x_n, V_n) \Big] \ge 0,
\]
which yields $\Gamma(x_n(c), x_n, V_n) \ge \Gamma(x_n(t), x_n, V_n)$ due to the fact that $t > c$. Thus, the function 
$c \mapsto \Gamma(x_n(c), x_n, V_n)$ is non-increasing. Hence with the use of \eqref{eq:PenTermDecrease1} one gets
\[
  g_0(x_n(c)) - g_0(x_n(t)) - \langle v_{n0}, x_n(c) - x_n(t) \rangle \le 0,
\]
i.e. the last statement of the lemma hold true.

Let us finally check that $\Gamma(x_n(c), x_n, V_n) \to \Gamma(\widehat{x}_n, x_n, V_n)$ as $c \to + \infty$. Arguing
by reductio ad absurdum, suppose that this statement is false. Then taking into account the definition of
$\widehat{x}_n$ one gets that there exist $\varepsilon > 0$ and an increasing unbounded sequence 
$\{ t_s \} \subset (0, + \infty)$ such that 
\begin{equation} \label{eq:InfeasMeasureNonLimit}
  \Gamma(x_n(t_s), x_n, V_n) \ge \Gamma(\widehat{x}_n, x_n, V_n) + \varepsilon
  \quad \forall s \in \mathbb{N}.
\end{equation}
Observe that by the definition of $x_n(\cdot)$ one has
\[
  Q_{t_s}(\widehat{x}_n, x_n, V_n) \ge Q_{t_s}(x_n(t_s), x_n, V_n) \quad \forall s \in \mathbb{N}.
\]
Hence with the use of inequality \eqref{eq:InfeasMeasureNonLimit} one obtains that
\[
  g_0(\widehat{x}_n) - \langle v_{n0}, \widehat{x}_n - x_n \rangle
  \ge g_0(x_n(t_s)) - \langle v_{n0}, x_n(t_s) - x_n \rangle + t_s \varepsilon \quad \forall s \in \mathbb{N}.
\]
Applying the third statement of the lemma one finally gets that
\[
  g_0(\widehat{x}_n) - \langle v_{n0}, \widehat{x}_n - x_n \rangle
  \ge g_0(x_n(t_1)) - \langle v_{n0}, x_n(t_1) - x_n \rangle + t_s \varepsilon \quad \forall s \in \mathbb{N},
\]
which is impossible, since $t_s \to + \infty$ as $s \to \infty$. Thus, 
$\Gamma(x_n(c), x_n, V_n) \to \Gamma(\widehat{x}_n, x_n, V_n)$ as $c \to + \infty$, and the proof is complete.
\end{proof}

The following theorem states that the subproblems of finding the required values of the penalty parameters $c_+$ and
$c_{n + 1}$ on  Steps 2--4 of Algorithmic Pattern~\ref{alg:SteeringPenalty} are always solvable and, therefore, this
algorithmic pattern is correctly defined.

\begin{theorem} \label{thrm:Correctness}
For any $n \in \mathbb{N}$ the following statements hold true:
\begin{enumerate}
\item{If $\Gamma(\widehat{x}_n, x_n, V_n) = \Gamma(x_n, x_n, V_n)$, then for any $\varepsilon_{feas} > 0$ there exists
$c_* \ge c_n$ such that for all $c_+ \ge c_*$ the inequality 
$\Gamma(x_n(c_+), x_n, V_n) \le \Gamma(\widehat{x}_n, x_n, V_n) + \varepsilon_{feas}$ holds true.
}

\item{If $\Gamma(\widehat{x}_n, x_n, V_n) < \Gamma(x_n, x_n, V_n)$, then for any $\eta_1 \in (0, 1)$ there exists 
$c_* \ge c_n$ such that for all $c_+ \ge c_*$ inequality \eqref{eq:FeasibilityRecuction} holds true.
}

\item{For any $\eta_2 \in (0, 1)$ there exists $c_* \ge c_+$ (here $c_+$ is from Step 4 of Algorithmic
Pattern~\ref{alg:SteeringPenalty}) such that inequality \eqref{eq:FeasReduction_PenReduction_Updated} is satisfied for
all $c_{n + 1} \ge c_*$.
}
\end{enumerate}
\end{theorem}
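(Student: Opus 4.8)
The plan is to reduce all three statements to the monotonicity and limiting behaviour of the map $c \mapsto \Gamma(x_n(c), x_n, V_n)$ established in Lemma~\ref{lem:PenTerm_Obj_Monotone}, after splitting $Q_c$ into its linear objective-model part and its $c$-scaled infeasibility part. Throughout I would abbreviate $\gamma(c) = \Gamma(x_n(c), x_n, V_n)$, $\Gamma_n = \Gamma(x_n, x_n, V_n) = \varphi(x_n)$ and $\widehat{\Gamma}_n = \Gamma(\widehat{x}_n, x_n, V_n)$, and repeatedly use the identity $Q_c(x, x_n, V_n) = g_0(x) - \langle v_{n0}, x - x_n \rangle + c\,\Gamma(x, x_n, V_n)$, which is immediate from the definitions of $Q_c$ and $\Gamma$. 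I would also record that $\gamma(c) \ge \widehat{\Gamma}_n$ for every $c$, since $\widehat{x}_n$ globally minimises $\Gamma(\cdot, x_n, V_n)$ on $A$ and $x_n(c) \in A$.

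For statement (1), under $\widehat{\Gamma}_n = \Gamma_n$ the target inequality reads $\gamma(c_+) \le \widehat{\Gamma}_n + \varepsilon_{feas}$. Since Lemma~\ref{lem:PenTerm_Obj_Monotone}(2) gives $\gamma(c) \to \widehat{\Gamma}_n$ as $c \to +\infty$, I would choose $c_* \ge c_n$ with $\gamma(c_*) \le \widehat{\Gamma}_n + \varepsilon_{feas}$, and the non-increasingness in Lemma~\ref{lem:PenTerm_Obj_Monotone}(1) propagates the bound to all $c_+ \ge c_*$. Statement (2) is handled identically: when $\widehat{\Gamma}_n < \Gamma_n$, inequality \eqref{eq:FeasibilityRecuction} rearranges to $\gamma(c_+) \le \widehat{\Gamma}_n + (1-\eta_1)(\Gamma_n - \widehat{\Gamma}_n)$, and the additive slack $(1-\eta_1)(\Gamma_n - \widehat{\Gamma}_n)$ is strictly positive, so the same convergence-plus-monotonicity argument produces a suitable $c_* \ge c_n$.

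For statement (3), the decomposition turns \eqref{eq:FeasReduction_PenReduction_Updated} into the equivalent inequality $\psi(c) \le (1-\eta_2)\,c\,(\Gamma_n - \gamma(c))$, where $\psi(c) = g_0(x_n(c)) - \langle v_{n0}, x_n(c) - x_n\rangle - g_0(x_n)$ is exactly the objective-model quantity (up to the constant $g_0(x_n)$) treated in Lemma~\ref{lem:PenTerm_Obj_Monotone}(3). I would then distinguish the sign of $\Gamma_n - \widehat{\Gamma}_n$. When $\widehat{\Gamma}_n = \Gamma_n$, one has $\gamma(c) \ge \widehat{\Gamma}_n = \Gamma_n$, so $\Gamma_n - \gamma(c) \le 0$; on the other hand, optimality of $x_n(c)$ tested against $x_n$ gives $Q_c(x_n(c), x_n, V_n) \le Q_c(x_n, x_n, V_n)$, i.e. $\psi(c) \le c(\Gamma_n - \gamma(c))$. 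Since $\Gamma_n - \gamma(c) \le 0$ and $1 - \eta_2 < 1$, this already yields the required inequality for every $c > 0$, so $c_* = c_+$ works.

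When $\widehat{\Gamma}_n < \Gamma_n$, the idea is to force the right-hand side to grow. By Lemma~\ref{lem:PenTerm_Obj_Monotone}(2), $\gamma(c) \to \widehat{\Gamma}_n$, so beyond some threshold $\Gamma_n - \gamma(c) \ge \tfrac12(\Gamma_n - \widehat{\Gamma}_n) =: \delta > 0$, whence $(1-\eta_2)c(\Gamma_n - \gamma(c)) \ge (1-\eta_2)\delta\,c \to +\infty$. The crux, and the one genuinely nontrivial step, is a uniform upper bound on $\psi(c)$: testing optimality of $x_n(c)$ against $\widehat{x}_n$, i.e. $Q_c(x_n(c), x_n, V_n) \le Q_c(\widehat{x}_n, x_n, V_n)$, the decomposition gives $g_0(x_n(c)) - \langle v_{n0}, x_n(c) - x_n\rangle \le g_0(\widehat{x}_n) - \langle v_{n0}, \widehat{x}_n - x_n\rangle + c(\widehat{\Gamma}_n - \gamma(c))$, and since $\gamma(c) \ge \widehat{\Gamma}_n$ the last term is $\le 0$, so $\psi(c) \le g_0(\widehat{x}_n) - \langle v_{n0}, \widehat{x}_n - x_n\rangle - g_0(x_n) =: M$ for all $c$, with $M$ a constant independent of $c$. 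With $\psi$ bounded above by $M$ and the right-hand side unbounded, any $c_* \ge c_+$ large enough that $\Gamma_n - \gamma(c_*) \ge \delta$ and $(1-\eta_2)\delta c_* \ge M$ settles \eqref{eq:FeasReduction_PenReduction_Updated} for all $c_{n+1} \ge c_*$. I expect everything except this uniform bound on $\psi(c)$ — obtained by comparing the penalty subproblem against the feasibility minimiser $\widehat{x}_n$ — to be routine bookkeeping on top of Lemma~\ref{lem:PenTerm_Obj_Monotone}.
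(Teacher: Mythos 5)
Your proof is correct and follows essentially the same route as the paper: parts (1) and (2) via the convergence and monotonicity of $c \mapsto \Gamma(x_n(c), x_n, V_n)$ from Lemma~\ref{lem:PenTerm_Obj_Monotone}, and part (3) via the sign argument in the degenerate case plus the uniform bound on the objective-model term obtained by testing $x_n(c)$ against $\widehat{x}_n$ (your $\psi(c) \le M$ is exactly the paper's $\omega(x_n(c)) \le \omega(\widehat{x}_n)$). Your case split on whether $\Gamma(\widehat{x}_n, x_n, V_n) = \Gamma(x_n, x_n, V_n)$ is equivalent to the paper's split on whether $\Gamma(x_n(c), x_n, V_n)$ ever drops below $\Gamma(x_n, x_n, V_n)$, so the difference is cosmetic.
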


\begin{proof}
The validity of the first two statement of the theorem follows directly from the fact that 
$\Gamma(x_n(c), x_n, V_n) \to \Gamma(\widehat{x}_n, x_n, V_n)$ as $c \to + \infty$ by
Lemma~\ref{lem:PenTerm_Obj_Monotone}. Let us prove the last statement of the theorem.

Observe that inequality \eqref{eq:FeasReduction_PenReduction_Updated} is satisfied for all $c_{n + 1} \ge c_+$, if
$\Gamma(x_n, x_n, V_n) = 0$ or $\Gamma(x_n(c), x_n, V_n) \ge \Gamma(x_n, x_n, V_n)$ for all $c \ge c_+$, since in this
case the right-hand side of inequality \eqref{eq:FeasReduction_PenReduction_Updated} is nonnegative, while the
left-hand side is nonpositive by the definition of $x_n(\cdot)$.

Thus, one can suppose that $\Gamma(x_n, x_n, V_n) > 0$ and $\Gamma(x_n(t_0), x_n, V_n) < \Gamma(x_n, x_n, V_n)$ for
some $t_0 \ge c_+$. From the first statement of Lemma~\ref{lem:PenTerm_Obj_Monotone} it follows that there exists
$\varepsilon > 0$ such that 
\begin{equation} \label{eq:PenTermDifference}
  \Gamma(x_n(c), x_n, V_n) \le \Gamma(x_n(t_0), x_n, V_n) 
  < \Gamma(x_n, x_n, V_n) - \varepsilon  \quad \forall c \ge t_0.
\end{equation}
Denote $\omega(x) = g_0(x) - \langle v_{n0}, x - x_n \rangle$ for all $x \in \mathbb{R}^d$. By the definition of
$x_n(\cdot)$ one has $Q_c(x_n(c), x_n, V_n) \le Q_c(\widehat{x}_n, x_n, V_n)$ for all $c > 0$, which implies that
\[
  \omega(x_n(c)) \le \omega(\widehat{x}_n) 
  + c \Big( \Gamma(\widehat{x}_n, x_n, V_n) - \Gamma(x_n(c), x_n, V_n) \Big) \le \omega(\widehat{x}_n)
  \quad \forall c > 0,
\]
where the last inequality follows from the fact that $\Gamma(\widehat{x}_n, x_n, V_n) \le \Gamma(x_n(c), x_n, V_n)$ by
definition (see Step~2 of Algorithmic Pattern~\ref{alg:SteeringPenalty}). Consequently, applying inequality
\eqref{eq:PenTermDifference} one obtains that for all 
$c \ge c_* := \max\{ t_0, (\omega(\widehat{x}_n) - \omega(x_n)) / (1 - \eta_2) \varepsilon \}$ the following inequality
holds true
\begin{align*}
  \omega(x_n(c)) - \omega(x_n) &\le \omega(\widehat{x}_n) - \omega(x_n) \le c (1 - \eta_2) \varepsilon 
  \\
  &\le c (\eta_2 - 1) \Big( \Gamma(x_n(c), x_n, V_n) - \Gamma(x_n, x_n, V_n) \Big)
\end{align*}
(recall that $\eta_2 < 1$ by definition). Adding the term $c( \Gamma(x_n(c), x_n, V_n) - \Gamma(x_n, x_n, V_n))$ to both
sides of this inequality one gets
\[
  Q_c(x_n(c), x_n, V_n) - Q_c(x_n, x_n, V_n) 
  \le c \eta_2 \Big( \Gamma(x_n(c), x_n, V_n) - \Gamma(x_n, x_n, V_n) \Big)
\]
for all $c \ge c_*$, that is, inequality \eqref{eq:FeasReduction_PenReduction_Updated} is satisfied for all $c \ge c_*$.
\end{proof}

\subsection{Convergence of the infeasibility measure}

Now we turn to a convergence analysis of Algorithmic Pattern~\ref{alg:SteeringPenalty}. Recall that this algorithmic
pattern is a DCA-type method for minimising the penalty function $\Phi_c(x) = f_0(x) + c \varphi(x)$ for the problem
$(\mathcal{P})$ (see \eqref{eq:ell1_PenaltyTerm}). 

\begin{definition} \label{def:PenTermCriticality}
One says that a point $x_* \in A$ is a \textit{critical} point of the penalty term $\varphi$, if there exist 
$v_k \in \partial h_k(x_*)$, $k \in \mathcal{I} \cup \mathcal{E} \cup \{ 0 \}$, and $w_j \in \partial g_j(x_*)$, 
$j \in \mathcal{E}$, such that $x_*$ is an optimal solution of the problem
\[
  \minimise_x \enspace \Gamma(x, x_*, V) \quad \text{subject to} \quad x \in A,
\]
where $V = (v_0, v_1, \ldots, v_m, w_{\ell + 1}, \ldots, w_m)$.
\end{definition}

Note that by definition $\Gamma(x, x_*, V_*) \ge \varphi(x)$ for all $x \in \mathbb{R}^d$ and the equality
$\Gamma(x_*, x_*, V_*) = \varphi(x_*)$ holds true (see~\eqref{eq:ell1_PenaltyTerm} and \eqref{def:LinInfeasMeasure}),
i.e. the function $\Gamma(\cdot, x_*, V_*)$ is a global convex majorant of the penalty term $\varphi$. With the use of
this fact one can easily check that any feasible point of the problem $(\mathcal{P})$ is critical for the penalty term
$\varphi$, since any such point is a global minimiser of $\varphi$ on $A$. For an infeasible point, the criticality of
the penalty term means that the constraints of the problem $(\mathcal{P})$ are in some sense degenerate at this point. 

Observe also that the inequality $\Gamma(\widehat{x}_n, x_n, V_n) < \Gamma(x_n, x_n, V_n)$ on Step~2 of Algorithmic
Pattern~\ref{alg:SteeringPenalty} simply means that $x_n$ is \textit{not} a critical point of the penalty term
$\varphi$. In particular, Step~3 of the algorithmic pattern is executed, provided $x_n(c_n)$ is infeasible for the
linearised problem \eqref{prob:LinearizedProblem} and $x_n$ is not a critical point of the penalty term $\varphi$.

Let us establish feasibility/infeasibility properties of limit points of sequences generated by Algorithmic
Pattern~\ref{alg:SteeringPenalty}. Namely, the following two theorems contain sufficient conditions for limit points
of the sequence generated by this algorithmic pattern to be either a feasible point of the problem $(\mathcal{P})$ or
an infeasible critical point of the penalty term $\varphi$. Let us note that the assumption on the sequence $\{ x_n \}$
of the second theorem might seem unusual at the first glance. We discuss it in details below, since the proof of
the theorem reveals the essence of this assumption.

\begin{theorem} \label{thrm:SteeringPenalty_Feasibility_1}
Suppose that the sequence $\{ x_n \}$ generated by Algorithmic Pattern~\ref{alg:SteeringPenalty} converges to some
point $x_*$. Then $x_*$ is either a feasible point of the problem $(\mathcal{P})$ or an infeasible critical point of
the penalty term $\varphi$.
\end{theorem}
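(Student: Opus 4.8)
The plan is to split on feasibility of the limit. If $\varphi(x_*) = 0$ then $x_*$ is feasible and there is nothing to prove, so I assume $\varphi(x_*) > 0$ and aim to show that $x_*$ is a critical point of $\varphi$ in the sense of Definition~\ref{def:PenTermCriticality}. The quantity driving the whole argument is the linearised feasibility gap
\[
  \delta_n := \Gamma(x_n, x_n, V_n) - \Gamma(\widehat{x}_n, x_n, V_n) = \varphi(x_n) - \Gamma(\widehat{x}_n, x_n, V_n) \ge 0,
\]
which vanishes precisely when $x_n$ is critical for $\varphi$. Everything reduces to proving $\liminf_{n \to \infty} \delta_n = 0$ and then passing to the limit in the feasibility subproblem \eqref{prob:StPen_FeasibilitySubproblem}.

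First I would record two continuity facts. Since the functions $g_k, h_k$ are finite convex on $\mathbb{R}^d$, they are continuous, hence $\varphi$ (see~\eqref{eq:ell1_PenaltyTerm}) is continuous and $\varphi(x_n) \to \varphi(x_*)$; in particular the consecutive differences $\varphi(x_n) - \varphi(x_{n+1}) \to 0$. Moreover, the subdifferentials of $g_k$ and $h_k$ are locally bounded with closed graph, so the subgradients $v_{nk} \in \partial h_k(x_n)$ and $w_{nj} \in \partial g_j(x_n)$ are bounded and, along a subsequence, converge to some $v_k \in \partial h_k(x_*)$, $w_j \in \partial g_j(x_*)$; I write $V_*$ for the resulting tuple.

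Next comes the core estimate, obtained by tracing the three ways $x_{n+1}$ can be produced. If $x_n(c_+)$ is feasible for the linearised problem (the branch going directly to Step~4), then $\Gamma(x_n(c_+), x_n, V_n) = 0$, and since $c_{n+1} \ge c_+$ the monotonicity in Lemma~\ref{lem:PenTerm_Obj_Monotone} forces $\Gamma(x_{n+1}, x_n, V_n) = 0$, whence $\varphi(x_{n+1}) = 0$; because $\varphi(x_{n+1}) \to \varphi(x_*) > 0$, this branch can occur only finitely often. If $x_n$ is critical for $\varphi$ (the Step~2 case $\Gamma(\widehat{x}_n, x_n, V_n) = \varphi(x_n)$), then $\delta_n = 0$ outright. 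In the remaining branch Step~3 enforces \eqref{eq:FeasibilityRecuction} for $x_n(c_+)$, and combining it with $c_{n+1} \ge c_+$ and the monotonicity of $c \mapsto \Gamma(x_n(c), x_n, V_n)$ yields $\varphi(x_{n+1}) \le \Gamma(x_{n+1}, x_n, V_n) \le \varphi(x_n) - \eta_1 \delta_n$, i.e. $\varphi(x_n) - \varphi(x_{n+1}) \ge \eta_1 \delta_n \ge 0$. Hence for all large $n$ either $\delta_n = 0$ or $0 \le \eta_1 \delta_n \le \varphi(x_n) - \varphi(x_{n+1})$, and since the right-hand side tends to zero I conclude that $\delta_n \to 0$ along a suitable subsequence, so $\liminf_n \delta_n = 0$.

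Finally, the limit passage. Along a subsequence on which $\delta_n \to 0$ and $V_n \to V_*$, I have $\Gamma(\widehat{x}_n, x_n, V_n) = \varphi(x_n) - \delta_n \to \varphi(x_*)$. For each fixed $x \in A$, optimality of $\widehat{x}_n$ gives $\Gamma(\widehat{x}_n, x_n, V_n) \le \Gamma(x, x_n, V_n)$, and since $\Gamma$ (defined in~\eqref{def:LinInfeasMeasure}) is jointly continuous, the right-hand side converges to $\Gamma(x, x_*, V_*)$. Thus $\varphi(x_*) \le \Gamma(x, x_*, V_*)$ for every $x \in A$; as $\Gamma(x_*, x_*, V_*) = \varphi(x_*)$, the point $x_*$ minimises $\Gamma(\cdot, x_*, V_*)$ over $A$, which is exactly criticality of $x_*$ for $\varphi$. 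The main obstacle is the case distinction yielding $\liminf_n \delta_n = 0$: one must correctly combine the Step~3 reduction rule \eqref{eq:FeasibilityRecuction} with the monotonicity in $c$ from Lemma~\ref{lem:PenTerm_Obj_Monotone} and rule out the feasible-linearised branch using $\varphi(x_*) > 0$; once this is in place, the limit argument is routine given outer semicontinuity of the convex subdifferentials.
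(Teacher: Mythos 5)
Your proof is correct. All the steps check out: the three branches of the algorithm are exhaustive; in the feasible-linearisation branch the monotonicity of $c \mapsto \Gamma(x_n(c), x_n, V_n)$ from Lemma~\ref{lem:PenTerm_Obj_Monotone} together with $\Gamma(\cdot, x_n, V_n) \ge \varphi(\cdot) \ge 0$ does give $\varphi(x_{n+1}) = 0$, so that branch is eventually excluded when $\varphi(x_*) > 0$; the Step~3 branch gives $\eta_1 \delta_n \le \varphi(x_n) - \varphi(x_{n+1})$; and the final limit passage uses only the boundedness and closedness of the subdifferential mappings plus joint continuity of $\Gamma$, exactly as needed.

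The route is, however, organised differently from the paper's. The paper argues by contradiction, assuming $x_*$ is infeasible and \emph{not} critical for $\varphi$, and splits into two cases according to whether infinitely many iterates $x_{n_k}$ are themselves critical for $\varphi$: in the first case it passes to the limit in $\Gamma(x_{n_k}, x_{n_k}, V_{n_k}) \le \Gamma(x, x_{n_k}, V_{n_k})$ (which is your limit passage specialised to $\delta_{n_k} = 0$), and in the second case it uses the non-criticality of $x_*$ to manufacture a test point $y$ and a uniform gap $\varepsilon/2$, concluding that $\varphi(x_n)$ decreases by at least $\eta_1 \varepsilon/2$ infinitely often while being monotone, hence tends to $-\infty$ --- contradicting $\varphi \ge 0$. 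You instead prove criticality \emph{directly}: the single quantitative inequality $\eta_1 \delta_n \le \varphi(x_n) - \varphi(x_{n+1})$ combined with the continuity of $\varphi$ (so that consecutive differences vanish along the convergent sequence) forces $\delta_n \to 0$, after which one limit passage covers both of the paper's cases at once. Your version is arguably cleaner: it avoids the case split, avoids the ``monotone and bounded below yet decreasing by a fixed amount'' contradiction, and only invokes a test point $x \in A$ in the final, routine step. The price is that your argument leans on the convergence of the whole sequence $\{x_n\}$ to make $\varphi(x_n) - \varphi(x_{n+1}) \to 0$; the paper's Case~II machinery is what survives (in modified form) in Theorem~\ref{thrm:SteeringPenalty_Feasibility_2}, where only subsequential convergence is available and the telescoping argument genuinely needs the summability hypothesis. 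One cosmetic remark: in your branch analysis the bound $\delta_n \le \max\{0, \varphi(x_n) - \varphi(x_{n+1})\}/\eta_1$ holds for \emph{all} large $n$, so in fact the full sequence $\delta_n \to 0$; the weaker $\liminf$ statement you settle for is still sufficient but undersells your own estimate.
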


\begin{proof}
Let the sequence $\{ x_n \}$ converge to some point $x_*$. Arguing by reductio ad absurdum, suppose that $x_*$ is
infeasible for the problem $(\mathcal{P})$, but is not critical for the penalty term $\varphi$. Let us consider two
cases.

\textbf{Case I.}~Suppose at first that there exists a subsequence $\{ x_{n_k} \}$ such that 
\[
  \Gamma(x_{n_k}, x_{n_k}, V_{n_k}) = \Gamma(\widehat{x}_{n_k}, x_{n_k}, V_{n_k}) \quad \forall k \in \mathbb{N},
\]
i.e. each point $x_{n_k}$ is critical for the penalty term $\varphi$. Then by the definition of $\widehat{x}_n$ for any
$k \in \mathbb{N}$ one has
\begin{equation} \label{eq:CriticalSubsequence}
  \Gamma(x_{n_k}, x_{n_k}, V_{n_k}) \le \Gamma(x, x_{n_k}, V_{n_k}) \quad \forall x \in A.
\end{equation}
The sequence of subgradients $\{ V_{n_k} \}$ is bounded by \cite[Thm.~24.7]{Rockafellar} due to the fact that the
sequence $\{ x_n \}$ is bounded as a convergent sequence. Therefore, replacing, if necessary, this sequence with a
subsequence, one can suppose that $\{ V_{n_k} \}$ converges to some 
$V^* = (v_0^*, v_1^*, \ldots, v_{\ell}^*, w_{\ell + 1}^*, \ldots, w_m^*)$. From the fact that the subdifferential
mapping of a convex function is closed (see, e.g. \cite[Thm.~24.4]{Rockafellar}) it follows that 
$v_k^* \in \partial h_k(x_*)$ for all $k \in \{ 0 \} \cup \mathcal{I} \cup \mathcal{E}$ and 
$w_j^* \in \partial g_j(x_*)$ for all $j \in \mathcal{E}$.

Now, passing to the limit in \eqref{eq:CriticalSubsequence} as $k \to \infty$ for each fixed $x \in A$ one gets that
\[
  \Gamma(x_*, x_*, V^*) \le \Gamma(x, x_*, V^*) \quad \forall x \in A.
\]
i.e. $x_*$ is a critical point of $\varphi$, which contradicts our assumption. 

\textbf{Case~II.} Suppose now that there exists $n_0 \in \mathbb{N}$ such that for any $n \ge n_0$ the point $x_n$ is
not critical for $\varphi$. Moreover, one can also assume that $\varphi(x_n) > 0$ for all $n \ge n_0$, since $x_*$ is
infeasible for the problem $(\mathcal{P})$. Therefore, the inequality
\begin{equation} \label{eq:InfeasibilityMeasRelaxation}
  \varphi(x_{n + 1}) < \varphi(x_n) \quad \forall n \ge n_0
\end{equation}
holds true. Indeed, if on Step~1 the equality $\Gamma(x_n(c_n), x_n, V_n) = 0$ holds true, then by
Lemma~\ref{lem:PenTerm_Obj_Monotone} one has
\[
  0 = \Gamma(x_n(c_n), x_n, V_n) \ge \Gamma(x_{n + 1}, x_n, V_n) \ge \varphi(x_{n + 1})
\]
(here we used the fact that $\Gamma(\cdot, x_n, V_n)$ is a global majorant of $\varphi(\cdot)$). Therefore for all
$n \ge n_0$ one has 
\[
  \Gamma(x_n(c_n), x_n, V_n) > 0, \quad \Gamma(x_n, x_n, V_n) > \Gamma(\widehat{x}_n, x_n, V_n),
\]
which implies that for all $n \ge n_0$ Algorithmic Pattern~\ref{alg:SteeringPenalty} executes Step~3 and
\[
  \Gamma(x_n(c_+), x_n, V_n) - \Gamma(x_n, x_n, V_n) 
  \le \eta_1 \Big[ \Gamma(\widehat{x}_n, x_n, V_n) - \Gamma(x_n, x_n, V_n) \Big] < 0
\]
Hence with the use of Lemma~\ref{lem:PenTerm_Obj_Monotone} and the fact that $\Gamma(\cdot, x_n, V_n)$ is a global
majorant of $\varphi(\cdot)$ one obtains that
\[
  \varphi(x_{n + 1}) \le \Gamma(x_{n + 1}, x_n, V_n) \le \Gamma(x_n(c_+), x_n, V_n)
  < \Gamma(x_n, x_n, V_n)  = \varphi(x_n),
\]
that is, inequality \eqref{eq:InfeasibilityMeasRelaxation} holds true.

As was noted above, from the fact that the sequence $\{ x_n \}$ converges to $x_*$ it follows that there exists a
subsequence of subgradients $\{ V_{n_k} \}$ converging to some 
$V^* = (v_0^*, v_1^*, \ldots, v_m^*, w_{\ell + 1}^*, \ldots, w_m^*)$ 
such that $v_k^* \in \partial h_k(x_*)$ for all $k \in \{ 0 \} \cup \mathcal{I} \cup \mathcal{E}$ and 
$w_j^* \in \partial g_j(x_*)$ for all $j \in \mathcal{E}$.

By our assumption $x_*$ is not a critical point of $\varphi$. Therefore there exist $\varepsilon > 0$ and $y \in A$
such that $\Gamma(y, x_*, V_*) \le \Gamma(x_*, x_*, V_*) - \varepsilon$. Consequently, there exists $k_0 \in \mathbb{N}$
such that for all $k \ge k_0$ the following inequalities hold true:
\[
  \Gamma(\widehat{x}_{n_k}, x_{n_k}, V_{n_k}) \le \Gamma(y, x_{n_k}, V_{n_k})
  \le \Gamma(x_{n_k}, x_{n_k}, V_{n_k}) - \frac{\varepsilon}{2}.
\]
Hence taking into account the fact that Algorithmic Pattern~\ref{alg:SteeringPenalty} executes Step~3 for all 
$n \ge n_0$ one gets that
\begin{align*}
  \Gamma(x_{n_k}(c_+), x_{n_k}, V_{n_k}) - \Gamma(x_{n_k}, x_{n_k}, V_{n_k})
  &\le \eta_1 \Big[ \Gamma(\widehat{x}_{n_k}, x_{n_k}, V_{n_k}) - \Gamma(x_{n_k}, x_{n_k}, V_{n_k}) \Big]
  \\
  &\le - \eta_1 \frac{\varepsilon}{2}.
\end{align*}
for any sufficiently large $k$. Therefore, as is easy to check, 
$\varphi(x_{n_k + 1}) \le \varphi(x_{n_k}) - \eta_1 \varepsilon / 2$ for any $k$ large enough, which with the use of
\eqref{eq:InfeasibilityMeasRelaxation} implies that $\varphi(x_n) \to - \infty$ as $n \to \infty$. However, by
definition the function $\varphi$ is nonnegative, which leads to an obvious contradiction.
\end{proof}  

\begin{theorem} \label{thrm:SteeringPenalty_Feasibility_2}
Let $\{ x_n \}$ be the sequence generated by Algorithmic Pattern~\ref{alg:SteeringPenalty}, and suppose that 
$\sum_{n = 0}^{\infty} \max\{ 0, \varphi(x_{n + 1}) - \varphi(x_n) \} < + \infty$. Then limit points of the sequence 
$\{ x_n \}$ are either feasible points of the problem $(\mathcal{P})$ or infeasible critical points of the penalty term
$\varphi$.
\end{theorem}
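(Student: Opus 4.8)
The plan is to argue by contradiction, mirroring the proof of Theorem~\ref{thrm:SteeringPenalty_Feasibility_1} but working along a convergent subsequence and replacing the monotone-descent mechanism (which there relied on convergence of the \emph{whole} sequence) by a summability argument fueled by the hypothesis. So I would suppose that $x_*$ is a limit point of $\{x_n\}$ which is infeasible yet not critical for $\varphi$, fix a subsequence $x_{n_k} \to x_*$, and note that since $\varphi$ is continuous with $\varphi(x_*) > 0$ one has $\varphi(x_{n_k}) \to \varphi(x_*) > 0$. The case in which infinitely many $x_{n_k}$ are critical for $\varphi$ is disposed of exactly as in Case~I of Theorem~\ref{thrm:SteeringPenalty_Feasibility_1}: each such index gives $\Gamma(x_{n_k}, x_{n_k}, V_{n_k}) \le \Gamma(x, x_{n_k}, V_{n_k})$ for all $x \in A$, the subgradients $\{V_{n_k}\}$ are bounded by \cite[Thm.~24.7]{Rockafellar} (the subsequence being bounded as it converges), and after passing to a further subsequence $V_{n_k} \to V^*$ with $V^*$ a valid tuple at $x_*$ by \cite[Thm.~24.4]{Rockafellar}, the limit yields criticality of $x_*$, a contradiction. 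Hence I may assume all $x_{n_k}$ are non-critical for large $k$.

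The crux is a consequence of the hypothesis. Writing $\alpha_n = \varphi(x_{n+1}) - \varphi(x_n)$ and $\alpha_n^{\pm}$ for its positive and negative parts, telescoping gives $\sum_{n=0}^{N} \alpha_n^- = \sum_{n=0}^{N} \alpha_n^+ + \varphi(x_0) - \varphi(x_{N+1}) \le \varphi(x_0) + \sum_{n=0}^{\infty} \alpha_n^+$, where the inequality uses $\varphi \ge 0$. Thus the hypothesis $\sum_n \alpha_n^+ < +\infty$ forces $\sum_n \alpha_n^- < +\infty$ as well: the total amount by which $\varphi$ can ever decrease is finite. It therefore suffices to show that along the subsequence each iteration decreases $\varphi$ by at least a fixed positive amount, contradicting this finite descent budget.

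To produce a uniform decrease I would exploit non-criticality of $x_*$. Passing to a further subsequence with $V_{n_k} \to V^*$ as above, non-criticality yields $y \in A$ and $\varepsilon > 0$ with $\Gamma(y, x_*, V^*) \le \varphi(x_*) - \varepsilon$; by joint continuity of $\Gamma$, and since $\Gamma(x_{n_k}, x_{n_k}, V_{n_k}) = \varphi(x_{n_k})$, for large $k$ the minimiser $\widehat{x}_{n_k}$ of $\Gamma(\cdot, x_{n_k}, V_{n_k})$ over $A$ satisfies $\Gamma(\widehat{x}_{n_k}, x_{n_k}, V_{n_k}) - \Gamma(x_{n_k}, x_{n_k}, V_{n_k}) \le -\varepsilon/2$. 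Now I examine iteration $n_k$: because $x_{n_k}$ is non-critical the algorithm either branches from Step~1 directly to Step~4, in which case $\Gamma(x_{n_k}(c_{n_k}), x_{n_k}, V_{n_k}) = 0$ forces, via Lemma~\ref{lem:PenTerm_Obj_Monotone} and $\Gamma \ge \varphi \ge 0$, that $\varphi(x_{n_k+1}) = 0$ and hence $\alpha_{n_k}^- = \varphi(x_{n_k}) \ge \varphi(x_*)/2$; or else it executes Step~3, giving $\varphi(x_{n_k+1}) - \varphi(x_{n_k}) \le \eta_1\big[\Gamma(\widehat{x}_{n_k}, x_{n_k}, V_{n_k}) - \Gamma(x_{n_k}, x_{n_k}, V_{n_k})\big] \le -\eta_1\varepsilon/2$ (using that $\Gamma(\cdot, x_{n_k}, V_{n_k})$ majorises $\varphi$ and the monotonicity in Lemma~\ref{lem:PenTerm_Obj_Monotone}). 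In either event $\alpha_{n_k}^- \ge \delta := \min\{\varphi(x_*)/2, \eta_1\varepsilon/2\} > 0$ for all large $k$, so $\sum_n \alpha_n^- \ge \sum_k \delta = +\infty$, contradicting the finite budget and completing the argument.

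I expect the main obstacle to be precisely this replacement of the monotone-descent mechanism by the telescoping identity: recognising that summability of the \emph{positive} increments together with nonnegativity of $\varphi$ already bounds the total possible \emph{descent} is what makes the subsequential argument succeed without assuming convergence of the whole sequence. A secondary technical nuisance is the branch in which Step~1 passes directly to Step~4 (so that $\widehat{x}_{n_k}$ is never actually computed by the algorithm); there the decrease must be read off directly from $\varphi(x_{n_k+1}) = 0$ rather than from the Step~3 inequality, which is why the uniform bound $\delta$ is taken as a minimum of two quantities.
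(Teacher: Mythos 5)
Your proposal is correct and follows essentially the same route as the paper's proof: contradiction at an infeasible non-critical limit point, a convergent subsequence of subgradients via \cite[Thms.~24.4, 24.7]{Rockafellar}, the uniform decrease $\Gamma(\widehat{x}_{n_k}, x_{n_k}, V_{n_k}) \le \Gamma(x_{n_k}, x_{n_k}, V_{n_k}) - \varepsilon/2$ from non-criticality, the same two-branch case analysis of the iteration, and the summability hypothesis combined with $\varphi \ge 0$ to reach a contradiction. The only (harmless) differences are that your Case~I is subsumed by the $\varepsilon/2$ inequality, which already forces $x_{n_k}$ to be non-critical for large $k$, and that you close the argument with a global ``finite descent budget'' from telescoping, whereas the paper bounds the intermediate increments by a small tail sum and concludes $\varphi(x_{n_k}) \to -\infty$; both accountings are valid.
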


\begin{proof}
Arguing by reductio ad absurdum, suppose that there exists a limit point $x_*$ of the sequence $\{ x_n \}$ that is
infeasible for the problem $(\mathcal{P})$, but not critical for the  penalty term $\varphi$. By definition there exists
a subsequence $\{ x_{n_k} \}$ converging to $x_*$. The corresponding sequence of subgradients $\{ V_{n_k} \}$ is bounded
by \cite[Thm.~24.7]{Rockafellar}. Therefore, without loss of generality one can suppose that it converges to some
vector $V^* = (v_0^*, v_1^*, \ldots, v_m^*, w_{\ell + 1}^*, \ldots, w_m^*)$. Moreover, one has 
$v_k^* \in \partial h_k(x_*)$ for all $k \in \{ 0 \} \cup \mathcal{I} \cup \mathcal{E}$ and 
$w_j^* \in \partial g_j(x_*)$ for all $j \in \mathcal{E}$ due to the closedness of the subdifferential mapping of a
convex function \cite[Thm.~24.4]{Rockafellar}.

By our assumption $x_*$ is not a critical point of $\varphi$. Therefore there exist $\varepsilon > 0$ and $y \in A$ such
that
\[
  \Gamma(y, x_*, V^*) < \Gamma(x_*, x_*, V^*) - \varepsilon.
\]
Consequently, one can find $k_0 \in \mathbb{N}$ such that
\[
  \Gamma(\widehat{x}_{n_k}, x_{n_k}, V_{n_k}) \le \Gamma(y, x_{n_k}, V_{n_k}) 
  \le \Gamma(x_{n_k}, x_{n_k}, V_{n_k}) - \frac{\varepsilon}{2}
  \quad \forall k \ge k_0.
\]
Therefore, for any such $k$ either $\Gamma(x_{n_k}(c_{n_k}), x_{n_k}, V_{n_k}) = 0$ and Algorithmic
Pattern~\ref{alg:SteeringPenalty} does not execute Steps 2 and 3 or $\Gamma(x_{n_k}(c_{n_k}), x_{n_k}, V_{n_k}) > 0$
and the algorithmic pattern necessarily executes Step~3. In the latter case one has
\begin{align*}
  \Gamma(x_{n_k + 1}, x_{n_k}, V_{n_k}) - \Gamma(x_{n_k}, x_{n_k}, V_{n_k}) 
  &\le \eta_1 \Big[ \Gamma(\widehat{x}_{n_k}, x_{n_k}, V_{n_k}) - \Gamma(x_{n_k}, x_{n_k}, V_{n_k}) \Big] 
  \\
  &\le - \frac{\eta_1 \varepsilon}{2} 
\end{align*}
by Lemma~\ref{lem:PenTerm_Obj_Monotone}, while in the former case this inequality is satisfied due to the fact that
$\Gamma(x_{n_{k + 1}}, x_{n_k}, V_{n_k}) = \Gamma(\widehat{x}_{n_k}, x_{n_k}, V_{n_k}) = 0$, which implies that
\[
  \Gamma(x_{n_k + 1}, x_{n_k}, V_{n_k}) - \Gamma(x_{n_k}, x_{n_k}, V_{n_k}) 
  = \Gamma(\widehat{x}_{n_k}, x_{n_k}, V_{n_k}) - \Gamma(x_{n_k}, x_{n_k}, V_{n_k}) \le - \frac{\varepsilon}{2}.
\]
Now, applying the definitions of $\Gamma$ and $\varphi$ (see~\eqref{def:LinInfeasMeasure} and
\eqref{eq:ell1_PenaltyTerm}) one finally gets that
\[
  \varphi(x_{n_k + 1}) - \varphi(x_{n_k}) \le
  \Gamma(x_{n_k + 1}, x_{n_k}, V_{n_k}) - \Gamma(x_{n_k}, x_{n_k}, V_{n_k}) \le
  - \frac{\eta_1 \varepsilon}{2} \quad \forall k \ge k_0.
\]
By our assumption the sum $\sum_{n = 0}^{\infty} \max\{ 0, \varphi(x_{n + 1}) - \varphi(x_n) \}$ is finite. Therefore,
there exists $k_1 \in \mathbb{N}$ such that 
\[
  \sum_{s = n_k}^{\infty} \max\{ 0, \varphi(x_{s + 1}) - \varphi(x_s) \} 
  \le \frac{\eta_1 \varepsilon}{4} \quad \forall k \ge k_1,
\]
which implies that
\begin{align*}
  \varphi(x_{n_{k + 1}}) - \varphi(x_{n_k}) 
  &= \sum_{s = n_k + 1}^{n_{k + 1} - 1} \Big( \varphi(x_{s + 1}) - \varphi(x_s) \Big)
  + \varphi(x_{n_k + 1}) - \varphi(x_{n_k})
  \\
  &\le \sum_{s = n_k + 1}^{\infty} 
  \max\{ 0, \varphi(x_{s + 1}) - \varphi(x_s) \} + \varphi(x_{n_k + 1}) - \varphi(x_{n_k})
  \le - \frac{\eta_1 \varepsilon}{4}
\end{align*}
for all $k \ge \max\{ k_0, k_1 \}$. Consequently, $\varphi(x_{n_k}) \to - \infty$ as $k \to \infty$, which contradicts
the fact that the function $\varphi$ is nonnegative by definition.
\end{proof}

\begin{remark} \label{rmrk:PenaltyRelaxation}
{(i)~It is worth noting that from the proof of Theorem~\ref{thrm:SteeringPenalty_Feasibility_1} it follows that if
the sequence $\{ x_n \}$ is infeasible for the problem $(\mathcal{P})$ and none of its elements is critical for the
penalty term $\varphi$, then $\varphi(x_{n + 1}) < \varphi(x_n)$ for all $n \in \mathbb{N}$. 
}

\noindent{(ii)~It is easily seen that if on $n$th iteration Algorithmic Pattern~\ref{alg:SteeringPenalty} does not
execute Step~2, then $\varphi(x_{n + 1}) = 0 \le \varphi(x_n)$. Similarly, if the algorithmic pattern executes Step~3,
then $\varphi(x_{n + 1}) < \varphi(x_n)$. However, if $x_n$ is a critical point of the penalty term $\varphi$, then in
the general case only the inequality $\varphi(x_{n + 1}) \le \varphi(x_n) + \varepsilon_{feas}$ holds true. In other
words, the infeasibility measure $\varphi(x_{n + 1})$ might increase, if $x_n$ is an infeasible critical point of the
penalty term. Therefore, the assumption in Theorem~\ref{thrm:SteeringPenalty_Feasibility_2} can be reformulated as
follows: if an infinite number of elements of the sequence $\{ x_n \}$ are critical points of the penalty term 
$\varphi$, then the sum of the increments of the infeasibility measure between iterations 
(i.e. $\max\{ 0, \varphi(x_{n + 1}) - \varphi(x_n) \}$) is finite. Let us note that this assumption is satisfied,
provided only a finite number of points in the sequence $\{ x_n \}$ are critical for the penalty term $\varphi$.
}

\noindent{(iii)~One can ensure that the assumption of Theorem~\ref{thrm:SteeringPenalty_Feasibility_2} is always
satisfied by slightly changing Algorithmic Pattern~\ref{alg:SteeringPenalty}. Namely, it is sufficient to replace the
inequality 
\[
  \Gamma(x_n(c_+), x_n, V_n) < \Gamma(\widehat{x}_n, x_n, V_n) + \varepsilon_{feas}
\]
on Step~2 of Algorithmic Pattern~\ref{alg:SteeringPenalty} with the inequality
\[
  \Gamma(x_n(c_+), x_n, V_n) < \Gamma(\widehat{x}_n, x_n, V_n) + \varepsilon_n
\]
for some sequence $\{ \varepsilon_n \} \subset (0, + \infty)$ such that $\sum_{n = 0}^{\infty} \varepsilon < + \infty$.
However, such replacement is inadvisable for practical implementation of Algorithmic Pattern~\ref{alg:SteeringPenalty},
since it might lead to an unnecessary increase of the penalty parameter $c_n$.

Let us also note that if the penalty function $Q_c(\cdot, x_n, V_n)$ is \textit{exact} for the linearised problem
\eqref{prob:LinearizedProblem}, then one can find $c_+ > 0$ on Step~2 of Algorithmic Pattern~\ref{alg:SteeringPenalty}
such that $\Gamma(x_n(c_+), x_n, V_n) = \Gamma(\widehat{x}_n, x_n, V_n)$. Arguing in a similar way to the proof of
\cite[Lemma~3.4]{ByrdLopezCalvaNocedal}, one can check that this penalty function is always exact for problems with
reverse convex constraints (that is, $g_i \equiv 0$ for all $i \in \mathcal{I}$) and affine equality constraints.
Sufficient conditions for the exactness of this penalty function for inequality constrained DC optimisation problems can
be found in \cite{Dolgopolik_DCcone}. If it is known that $Q_c(\cdot, x_n, V_n)$ is exact, then one can modify Step~2 of
Algorithmic Pattern~\ref{alg:SteeringPenalty} accordingly to guarantee that the assumption of
Theorem~\ref{thrm:SteeringPenalty_Feasibility_2} always holds true.
}
\end{remark}

\subsection{Convergence to critical points}

Next we analyse criticality of limit points of sequences generated by Algorithmic Pattern~\ref{alg:SteeringPenalty}. 
Observe that the sequence of penalty parameters $\{ c_n \}$ from Algorithmic Pattern~\ref{alg:SteeringPenalty} is by
definition nondecreasing. Therefore, either this sequence is bounded above and converges to some $c_* > 0$ or it
increases unboundedly. We divide a convergence analysis of Algorithmic Pattern~\ref{alg:SteeringPenalty} into two parts
corresponding to these two particular types of behaviour of the sequence $\{ c_n \}$. First, we consider the case when
the sequence of penalty parameters is bounded.

To simplify convergence analysis, below we suppose that the penalty parameter $c_+$ on Steps 2 and 3 of Algorithmic
Pattern~\ref{alg:SteeringPenalty} has the form $c_+ = \rho^{s_n} c_n$ for some fixed $\rho > 1$ and some 
$s_n \in \mathbb{N}$ (or $c_+ = c_n + s_n \rho$ for some $\rho > 0$). Similarly, we assume that 
$c_{n + 1} = \rho^{s_n} c_+$ (or $c_{n + 1} = c_+ + s_n \rho$) for some $s_n \in \mathbb{N}$ on Step~4. In other words,
we suppose that the required values of the penalty parameter $c_+$ and $c_{n + 1}$ are found by repeatedly increasing a
current value of the penalty parameter by constant factor $\rho > 1$ (or by constant value $\rho > 0$) till the
corresponding inequality is satisfied. Moreover, we suppose that the penalty parameter is not increased when it is
unnecessary, that is, if the corresponding inequalities are satisfied for $c_+ = c_n$ (or $c_{n + 1} = c_+$), then one
defines $c_+ = c_n$ (or $c_{n + 1} = c_+$) on all iterations of 
Algorithmic Pattern~\ref{alg:SteeringPenalty}.

\begin{theorem} \label{thrm:SteeringExPen_Criticality}
Let $\{ x_n \}$ be the sequence generated by Algorithmic Pattern~\ref{alg:SteeringPenalty}. Suppose that the penalty 
function $\Phi_{c_n}$ is bounded below on the set $A$ for some $n \in \mathbb{N}$ and the corresponding sequence of 
penalty parameters $\{ c_n \}$ is bounded and, thus, converges to some $c_* > 0$. Then the following statements hold
true:
\begin{enumerate}
\item{$|\Phi_{c_n}(x_{n + 1}) - \Phi_{c_n}(x_n)| \to 0$ as $n \to \infty$; \label{st:Convergence1}}

\item{if $h_0$ is strongly convex, then $\| x_{n + 1} - x_n \| \to 0$ as $n \to \infty$, where $\| \cdot \|$ is any
norm on $\mathbb{R}^d$;
\label{st:Convergence2}}
 
\item{all limit points of the sequence $\{ x_n \}$ are generalised critical points for the penalty parameter $c_*$;
\label{st:Convergence3}}

\item{all feasible limit points of the sequence $\{ x_n \}$ are critical for the problem $(\mathcal{P})$.}
\end{enumerate}
\end{theorem}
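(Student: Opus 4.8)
The plan is to read off statement~4 as a direct corollary of the generalised criticality already secured in statement~\ref{st:Convergence3}, using feasibility to pass from the penalized problem back to the original one. Let $x_*$ be a feasible limit point of $\{x_n\}$. Being a limit point, $x_*$ falls under statement~\ref{st:Convergence3}, which provides subgradients $v_k^* \in \partial h_k(x_*)$, $k \in \{0\} \cup \mathcal{I} \cup \mathcal{E}$, and $w_j^* \in \partial g_j(x_*)$, $j \in \mathcal{E}$, such that $x_*$ globally minimises over $A$ the convex majorant $Q_{c_*}$ of \eqref{eq:GlobalPenMajorant} formed at the base point $x_*$ with these subgradients; that is, $x_*$ is a generalised critical point of $(\mathcal{P})$ for the limiting value $c_*$ of the penalty parameter.

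The main step is then to upgrade generalised criticality to genuine criticality, which is exactly the converse direction recorded in Remark~\ref{rmrk:GeneralizedCriticality}. I would invoke that remark directly, but to keep the argument transparent I would reproduce its justification through Lemma~\ref{lem:Criticality}. Since $x_*$ is feasible, every penalty term in \eqref{eq:GlobalPenMajorant} vanishes at $x_*$, so $Q_{c_*}(x_*) = g_0(x_*)$; combined with the monotonicity of $c \mapsto Q_c$ noted in the proof of Lemma~\ref{lem:Criticality}, the fact that $x_*$ minimises $Q_{c_*}$ forces $x_*$ to minimise $Q_c$ for every $c \ge c_*$ as well. Thus the data $(c_*, v_k^*, w_j^*)$ meet the hypotheses of the ``if'' direction of Lemma~\ref{lem:Criticality}, which delivers the optimality inclusion \eqref{eq:Optimality} together with the complementarity conditions $\lambda_i f_i(x_*) = 0$; in other words, $x_*$ is critical for $(\mathcal{P})$ in the sense of Definition~\ref{def:Criticality}.

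I do not anticipate a substantive obstacle at this stage: once statement~\ref{st:Convergence3} is available, statement~4 is essentially a specialisation of it to feasible limit points. The only point demanding care is the bookkeeping --- verifying that the subgradients and the specific value $c_*$ produced by the generalised-criticality conclusion are precisely the ingredients consumed by Lemma~\ref{lem:Criticality}, and that it is feasibility (not an additional qualification) which annihilates the penalty contributions and thereby turns a minimiser of $Q_{c_*}$ into a critical point of $(\mathcal{P})$ rather than a mere stationary point of the penalized surrogate. The genuinely demanding analysis --- the boundedness of $\{c_n\}$ and the descent/telescoping estimates underlying statements~\ref{st:Convergence1}--\ref{st:Convergence3} --- belongs to the earlier parts of the theorem and is assumed here.
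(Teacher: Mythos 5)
Your argument for statement~4 is correct and is exactly the paper's route: the paper disposes of it in one sentence by combining statement~\ref{st:Convergence3} with Remark~\ref{rmrk:GeneralizedCriticality}, and your expanded justification via Lemma~\ref{lem:Criticality} (feasibility kills the penalty terms, so $Q_{c_*}(x_*)=g_0(x_*)$ and the minimiser property propagates to all $c\ge c_*$, whence the inclusion \eqref{eq:Optimality}) is a faithful unpacking of that remark. There is no problem with the bookkeeping you flag: the subgradients $v_k^*, w_j^*$ and the value $c_*$ produced by generalised criticality are precisely the data Lemma~\ref{lem:Criticality} consumes.

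The genuine gap is that you prove only one of the four claims. Statements~\ref{st:Convergence1}--\ref{st:Convergence3} are not ``earlier parts of the theorem'' to be assumed --- they are the theorem, and statement~4 is the only part that is a formal corollary. What is missing, concretely: (i) the observation that under the stated update rule (multiplication by a fixed $\rho>1$) a bounded nondecreasing $\{c_n\}$ must be \emph{eventually constant}, equal to $c_*$, so that for large $n$ each $x_{n+1}$ minimises $Q_{c_*}(\cdot,x_n,V_n)$ over $A$; (ii) the majorisation inequality \eqref{eq:ConvexMajorant_Steering}, which converts $Q_{c_*}(x_{n+1},x_n,V_n)\le Q_{c_*}(x_n,x_n,V_n)$ into monotone descent $\Phi_{c_*}(x_{n+1})\le\Phi_{c_*}(x_n)$, and together with boundedness below gives statement~\ref{st:Convergence1}; (iii) the strong-convexity refinement $\Phi_{c_*}(x_n)\ge\Phi_{c_*}(x_{n+1})+\tfrac{\mu}{2}\|x_{n+1}-x_n\|^2$ for statement~\ref{st:Convergence2}; and (iv) for statement~\ref{st:Convergence3}, a contradiction argument: extract a subsequence $x_{n_k}\to x_*$ with $V_{n_k}\to V^*$ (boundedness and closedness of the subdifferential map), assume $x_*$ is not generalised critical so that some $y\in A$ gives $Q_{c_*}(y,x_*,V^*)<Q_{c_*}(x_*,x_*,V^*)-\varepsilon$, pass this strict decrease to the iterates for large $k$ to get $\Phi_{c_*}(x_{n_k+1})\le\Phi_{c_*}(x_{n_k})-\varepsilon/2$, and combine with the monotonicity from (ii) to force $\Phi_{c_*}(x_n)\to-\infty$, contradicting boundedness below. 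Since your statement~4 argument explicitly consumes statement~\ref{st:Convergence3}, the proposal as written is circularly incomplete: it derives the easiest claim from the hardest one without establishing the latter.
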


\begin{proof}
\textbf{Part~\ref{st:Convergence1}.}~From our assumptions it follows that the penalty parameter $c_n$ remains constant
for any sufficiently large $n$, i.e. there exists $n_0 \in \mathbb{N}$ such that $c_n = c_{n_0} = c_*$ for all 
$n \in \mathbb{N}$. Therefore, for any $n \ge n_0$ the point $x_{n + 1}$ is defined as a globally optimal solution of
the problem
\[
  \minimise_x \enspace Q_{c_*}(x, x_n, V_n) \quad \text{subject to} \quad x \in A.
\]
Hence $Q_{c_*}(x_{n + 1}, x_n, V_n) \le Q_{c_*}(x_n, x_n, V_n)$, which with the use of inequalities
\eqref{eq:ConvexMajorant_Steering} implies that $\Phi_{c_*}(x_{n + 1}) \le \Phi_{c_*}(x_n)$ for all $n \ge n_0$, that
is, the sequence $\{ \Phi_{c_*}(x_n) \}_{n \ge n_0}$ is nondecreasing. Moreover, one can conclude that 
$|\Phi_{c_n}(x_{n + 1}) - \Phi_{c_n}(x_n)| \to 0$ as $n \to \infty$, since the penalty function $\Phi_{c_n}$ is bounded
below for some $n \in \mathbb{N}$. 

\textbf{Part~\ref{st:Convergence2}.}~Let $h_0$ be strongly convex. Then there exists $\mu > 0$ such that
\[
  h_0(x_{n + 1}) - h_0(x_n) \ge \langle v_{n0}, x_{n + 1} - x_n \rangle + \frac{\mu}{2} \| x_{n + 1} - x_n \|^2
  \quad \forall n \in \mathbb{N}.
\]
By the definition of $x_{n + 1}$ and inequalities \eqref{eq:ConvexMajorant_Steering} one has
\[
  \Phi_{c_n}(x_n) = Q_{c_n}(x_n, x_n, V_n) - h_0(x_n) \ge Q_{c_n}(x_{n + 1}, x_n, V_n) - h_0(x_n)
  \quad \forall n \in \mathbb{N}.
\]
Hence applying the inequality for $h_0$ and the definitions of subgradients $v_k$ and $w_j$ one obtains that
\[
  \Phi_{c_n}(x_n) \ge Q_{c_n}(x_{n + 1}, x_n, V_n) - h_0(x_n) 
  \ge \Phi_{c_n}(x_{n + 1}) + \frac{\mu}{2} \| x_{n + 1} - x_n \|^2 
  \quad \forall n \in \mathbb{N}
\]
(see \eqref{eq:ConvexMajorant_Steering}). Therefore, by the first statement of the theorem 
$\| x_{n + 1} - x_n \| \to 0$ as $n \to \infty$.

\textbf{Part~\ref{st:Convergence3}.}~Firstly, let us note that the validity of the last statement of the theorem follows
directly from the third statement and Remark~\ref{rmrk:GeneralizedCriticality}. Therefore, it remains to prove that all
limit point of the sequence $\{ x_n \}$ are generalised critical points.

Arguing by reductio ad absurdum, suppose that there exists a limit point $x_*$ of the sequence $\{ x_n \}$ that is not
a generalised critical point for $c_*$. By definition there exists a subsequence $\{ x_{n_k} \}$ converging to $x_*$.
As was noted several times above, the corresponding sequence of subgradients $\{ V_{n_k} \}$ is bounded and without loss
of generality one can suppose that it converges to some 
$V^* = (v_0^*, v_1^*, \ldots, v_m^*, w_{\ell + 1}^*, \ldots, w_m^*)$ with $v_k^* \in \partial h_k(x_*)$ for all 
$k \in \{ 0 \} \cup \mathcal{I} \cup \mathcal{E}$ and $w_j^* \in \partial g_j(x_*)$ for all $j \in \mathcal{E}$.

Since $x_*$ is not a generalised critical point for $c_*$, there exist $y \in A$ and $\varepsilon > 0$ such that
\[
  Q_{c_*}(y, x_*, V^*) < Q_{c_*}(x_*, x_*, V^*) - \varepsilon.
\]
Bearing in mind the facts that $x_{n_k}$ converges to $x_*$, $V_{n_k}$ converges to $V_*$, and $c_n = c_*$ for all 
$n \ge n_0$ one obtains that there exists $k_0 \in \mathbb{N}$ such that 
\[
  Q_{c_*}(x_{n_k + 1}, x_{n_k}, V_{n_k}) \le Q_{c_*}(y, x_{n_k}, V_{n_k}) 
  \le Q_{c_*}(x_{n_k}, x_{n_k}, V_{n_k}) - \frac{\varepsilon}{2}
\]
for all $k \ge k_0$. Applying inequalities \eqref{eq:ConvexMajorant_Steering} one gets that
\begin{align*}
  \Phi_{c_*}(x_{n_k + 1}) &\le Q_{c_*}(x_{n_k + 1}, x_{n_k}, V_{n_k}) - h_0(x_{n_k}) 
  \\
  &\le Q_{c_*}(x_{n_k}, x_{n_k}, V_{n_k}) - h_0(x_{n_k}) - \frac{\varepsilon}{2}
  = \Phi_{c_*}(x_{n_k}) - \frac{\varepsilon}{2}
\end{align*}
for all $k \ge k_0$. Hence with the use of the fact that the sequence $\{ \Phi_{c_*}(x_n) \}_{n \ge n_0}$ is
nondecreasing one can conclude that $\Phi_{c_*}(x_n) \to - \infty$ as $n \to \infty$, which contradicts our assumption
that the function $\Phi_{c_n}$ is bounded below on the set $A$ for some $n \in \mathbb{N}$.
\end{proof}

\begin{remark}
Note that the assumption on the strong convexity of the function $h_0$ is not restrictive, since one can always use 
the following DC decomposition of the objective function: $f_0(x) = ( g_0(x) + \mu |x|^2 ) - ( h_0(x) + \mu |x|^2 )$ for
some $\mu > 0$, where $|\cdot|$ is the Euclidean norm.
\end{remark}

\subsection{Sufficient conditions for the boundedness of the penalty parameter}

Let us now turn to the case when the penalty parameter $c_n$ increases unboundedly. This assumption significantly
complicates convergence analysis of the steering exact penalty DCA, and it is unclear whether limit points of 
the sequence generated by this method are (in some sense) generalised critical points in this case. 

A detailed analysis of the case when the penalty parameter increases unboundedly lies outside the scope of this paper.
Here we present only a partial convergence result of this case under the assumption that there are no equality 
constraints (i.e. $\mathcal{E} = \emptyset$). This result demonstrates that if the penalty parameter increases
unboundedly, but the sequence generated by the method converges to a feasible point of the problem $(\mathcal{P})$, then
a natural constraint qualification fails at the limit point (cf. \cite[Thm~3.12, part~(b)]{ByrdLopezCalvaNocedal} and
\cite[Thm.~7]{Dolgopolik_DCcone}). The following definition describes this constraint qualification.

\begin{definition}
Let $\mathcal{E} = \emptyset$ and $x_*$ be a given point. One says that \textit{linearised Slater's condition} holds
true at $x_*$, if for any $v_i \in \partial h_i(x_*)$, $i \in \mathcal{I}$, there exists $y \in A$ such that 
\[  
  g_i(y) - h_i(x_*) - \langle v_i, y - x_* \rangle < 0 \quad \forall i \in \mathcal{I}.
\]
\end{definition}

Note that in the case when the inequality constraints of the problem $(\mathcal{P})$ are convex (i.e. $h_i \equiv 0$ for
all $i \in \mathcal{I}$), linearised Slater's condition coincides with the classical Slater's condition from convex
programming. In the case of problems with reverse convex constraints (i.e. $g_i \equiv 0$ for all $i \in \mathcal{I}$),
linearised Slater's conditions is satisfied at a feasible point $x_*$, provided 
$0 \notin \co\{ \partial h_i(x_*) \mid i \in \mathcal{I}(x_*) \}$, which can be easily verified with the use of the
separation theorem. One can check that the Mangasarian-Fromivitz constraint qualification also implies the validity of
linearised Slater's condition.

\begin{theorem} \label{thrm:SteeringExPen_SlaterLimit}
Let $\mathcal{E} = \emptyset$, the function $f_0$ be coercive on the feasible set of the problem $(\mathcal{P})$, and
$\{ x_n \}$ be the sequence generated by Algorithmic Pattern~\ref{alg:SteeringPenalty}. Suppose that this sequence
converges to a point $x_*$ at which linearised Slater's condition holds true. Then $x_*$ is feasible and critical for
the problem $(\mathcal{P})$, and the corresponding sequence of penalty parameters $\{ c_n \}$ is bounded.
\end{theorem}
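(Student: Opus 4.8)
The plan is to establish the three assertions --- feasibility of $x_*$, boundedness of $\{c_n\}$, and criticality of $x_*$ --- in that order, the boundedness being the crux.

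\emph{Feasibility.} Since $\{x_n\}$ converges to $x_*$, Theorem~\ref{thrm:SteeringPenalty_Feasibility_1} already tells us that $x_*$ is either feasible or an infeasible critical point of the penalty term $\varphi$, so it suffices to rule out the latter. I would pass to a subsequence along which the (bounded, by \cite[Thm.~24.7]{Rockafellar}) subgradients $V_{n_k}$ converge to some $V^* = (v_0^*, \ldots, v_m^*)$ with $v_i^* \in \partial h_i(x_*)$, using closedness of the subdifferential mapping \cite[Thm.~24.4]{Rockafellar}. If $x_*$ were infeasible, then $\Gamma(x_*, x_*, V^*) = \varphi(x_*) > 0$; linearised Slater's condition applied to the $v_i^*$ yields $y \in A$ with $g_i(y) - h_i(x_*) - \langle v_i^*, y - x_* \rangle < 0$ for all $i \in \mathcal{I}$, whence $\Gamma(y, x_*, V^*) = 0 < \Gamma(x_*, x_*, V^*)$. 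Thus $x_*$ is not a minimiser of $\Gamma(\cdot, x_*, V^*)$ on $A$, i.e.\ not critical for $\varphi$ --- a contradiction. Hence $x_*$ is feasible.

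\emph{A uniform Slater point.} Before bounding $\{c_n\}$, I would upgrade linearised Slater's condition at $x_*$ to a uniform statement along the tail. Using $x_n \to x_*$, boundedness of $\{V_n\}$, and \cite[Thm.~24.4]{Rockafellar}, one obtains $N_0 \in \mathbb{N}$, a gap $\delta > 0$, and a single $y^* \in A$ such that the linearised constraint functions $G_{ni}(x) := g_i(x) - h_i(x_n) - \langle v_{ni}, x - x_n \rangle$ appearing in \eqref{def:LinInfeasMeasure} satisfy $G_{ni}(y^*) \le -\delta$ for all $i \in \mathcal{I}$ and all $n \ge N_0$. Then $\Gamma(y^*, x_n, V_n) = 0$, so $\Gamma(\widehat{x}_n, x_n, V_n) = 0$ for $n \ge N_0$, and whenever $x_n$ is infeasible one has $\Gamma(\widehat{x}_n, x_n, V_n) = 0 < \varphi(x_n) = \Gamma(x_n, x_n, V_n)$; hence no infeasible iterate is critical for $\varphi$ and the while-loop of Step~2 is never entered for $n \ge N_0$.

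\emph{Boundedness of $\{c_n\}$ --- the main obstacle.} I would produce a threshold $C$, independent of $n$, past which no step increases the penalty. Note first that the subgradient inequality $h_i(x) \ge h_i(x_n) + \langle v_{ni}, x - x_n \rangle$ gives $f_i(x) \le G_{ni}(x)$, so the linearised-feasible set is contained in the feasible set of $(\mathcal{P})$; likewise $\omega_n(x) := g_0(x) - \langle v_{n0}, x - x_n \rangle \ge f_0(x) + h_0(x_n)$. Therefore, since $f_0$ is coercive (hence bounded below) on the feasible set, the optimal values $\theta_n$ of the linearised problem \eqref{prob:LinearizedProblem} are bounded below uniformly in $n \ge N_0$. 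Combined with $\sup_n \omega_n(y^*) < +\infty$, the standard Slater multiplier bound $(\omega_n(y^*) - \theta_n)/\delta$ yields a \emph{uniform} exactness threshold $\bar c$ for the $\ell_1$ penalty of \eqref{prob:LinearizedProblem} (cf.\ \cite{Dolgopolik_DCcone} and \cite[Lemma~3.4]{ByrdLopezCalvaNocedal}): for $c \ge \bar c$ one has $\Gamma(x_n(c), x_n, V_n) = 0$, which makes the conditions on Steps~2 and~3 hold automatically (substituting $\Gamma(\widehat{x}_n, \cdot) = 0$ into \eqref{eq:FeasibilityRecuction} reduces it to $\Gamma(x_n(c_+), x_n, V_n) \le (1 - \eta_1)\varphi(x_n)$). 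For Step~4, note that for $c \ge \bar c$ the point $x_n(c)$ minimises $\omega_n$ over the linearised-feasible set, so $Q_c(x_n(c), x_n, V_n) = \theta_n$ and $Q_c(x_n, x_n, V_n) = g_0(x_n) + c\varphi(x_n)$; hence \eqref{eq:FeasReduction_PenReduction_Updated} reduces to $\theta_n - g_0(x_n) \le (1 - \eta_2)\, c\, \varphi(x_n)$. To bound the left-hand side I would take $\tau_n := \varphi(x_n)/\delta$ and $x_{\tau_n} := (1 - \tau_n) x_n + \tau_n y^*$; convexity of the $G_{ni}$ together with $G_{ni}(x_n) = f_i(x_n) \le \varphi(x_n)$ and $G_{ni}(y^*) \le -\delta$ shows $G_{ni}(x_{\tau_n}) \le 0$, so $\theta_n \le \omega_n(x_{\tau_n})$, and convexity of $\omega_n$ gives $\theta_n - g_0(x_n) \le \tau_n\big(\omega_n(y^*) - \omega_n(x_n)\big) \le (M/\delta)\varphi(x_n)$, where $M := \sup_{n \ge N_0} |\omega_n(y^*) - \omega_n(x_n)| < +\infty$ because $x_n \to x_*$ and $\{v_{n0}\}$ is bounded. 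Thus \eqref{eq:FeasReduction_PenReduction_Updated} holds for every $c \ge c_2 := M/\big(\delta(1 - \eta_2)\big)$. With $C := \max\{\bar c, c_2\}$ and the penalty increased only by a fixed factor $\rho$, I conclude $c_{n+1} \le \max\{c_n, \rho C\}$ for all $n \ge N_0$, so $\{c_n\}$ is bounded. I expect the uniform lower bound on $\theta_n$ and the convex-combination estimate to be the delicate points; the rest is bookkeeping.

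\emph{Criticality.} Being nondecreasing and bounded, $\{c_n\}$ stabilises: $c_n = c_*$ for all $n \ge n_1$, and for such $n$ the iterate $x_{n+1} = x_n(c_*)$ globally minimises $Q_{c_*}(\cdot, x_n, V_n)$ on $A$, so $Q_{c_*}(x_{n+1}, x_n, V_n) \le Q_{c_*}(x, x_n, V_n)$ for all $x \in A$. As the whole sequence converges, both $x_n \to x_*$ and $x_{n+1} \to x_*$; passing to a subsequence with $V_{n_k} \to V^*$ (again $v_i^* \in \partial h_i(x_*)$ by \cite[Thm.~24.4]{Rockafellar}) and using the joint continuity of $Q_{c_*}$ in all its arguments, I would let $k \to \infty$ to obtain $Q_{c_*}(x_*, x_*, V^*) \le Q_{c_*}(x, x_*, V^*)$ for every $x \in A$. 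Hence $x_*$ is a generalised critical point of $(\mathcal{P})$ for $c_*$, and since it is feasible, Remark~\ref{rmrk:GeneralizedCriticality} shows that $x_*$ is critical for $(\mathcal{P})$. (Equivalently, once $\{c_n\}$ is known to be bounded one may invoke Theorem~\ref{thrm:SteeringExPen_Criticality}, whose boundedness-below hypothesis is automatic along the convergent sequence because $\Phi_{c_*}(x_n) \to f_0(x_*)$.)
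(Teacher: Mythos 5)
Your argument is sound in substance and reaches all three conclusions, but it takes a genuinely different route through the central difficulty. The paper proves boundedness of $\{c_n\}$ via a separate lemma establishing \emph{exactness} of $Q_c(\cdot,x,V)$ uniformly for $x$ near $x_*$: it derives a uniform Hoffman-type error bound $\Gamma(z,x,V)\ge (\varkappa/r)\,\dist(z,\Omega(x,V))$ from the Slater margin, combines it with a local Lipschitz/calmness estimate for the linearised objective, and then runs a proof by contradiction (if $c_n\to\infty$, then eventually every $x_n(c_n)$ is linearised-feasible, every $x_n$ with $n\ge n_0+1$ is feasible for $(\mathcal{P})$, both sides of \eqref{eq:FeasReduction_PenReduction_Updated} vanish, and $c_n$ stabilises). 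You instead obtain the exactness threshold $\bar c$ from Lagrangian duality and the classical Slater multiplier bound $(\omega_n(y^*)-\theta_n)/\delta$, and you prove boundedness \emph{directly} by exhibiting an explicit uniform threshold $C$; this forces you to verify the Step-4 inequality also at iterations where $x_n$ is still infeasible, which you do with the convex-combination estimate $\theta_n-g_0(x_n)\le (M/\delta)\varphi(x_n)$ — a step the paper's contradiction argument never needs. Your route buys explicit constants and avoids the calmness machinery of \cite{Dolgopolik_ExactPenalty}; the paper's route avoids duality and the transition-iteration bookkeeping. Your feasibility and criticality arguments coincide with the paper's (contrapositive of Theorem~\ref{thrm:SteeringPenalty_Feasibility_1}, then Theorem~\ref{thrm:SteeringExPen_Criticality}/its proof).

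Two points need patching. First, a \emph{single} Slater point $y^*$ valid along the whole tail need not exist when the subdifferentials $\partial h_i(x_*)$ are not singletons: linearised Slater's condition produces a $y$ \emph{depending on} the chosen subgradients (take $h_1(x)=|x^{(1)}|$ at $x_*=0$: the selections $\pm e_1$ require $y$ on opposite sides), and the algorithm's selections $v_{ni}$ need not converge. The fix is the compactness argument the paper uses in Part~1 of Lemma~\ref{lem:ExactPenalty}: cover $\prod_i\partial h_i(x_*)$ by finitely many balls, each with its own Slater point and margin, and take $\delta$, $M$, $\bar c$, $c_2$ as extrema over this finite family; everything else in your estimate survives. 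Second, your claim that the Step-2 while-loop is never entered for $n\ge N_0$ is not quite right: a \emph{feasible} iterate $x_n$ satisfies $\Gamma(\widehat x_n,x_n,V_n)=\Gamma(x_n,x_n,V_n)=0$ and is critical for $\varphi$, so the loop is entered whenever $c_n<\bar c$ and $\Gamma(x_n(c_n),x_n,V_n)>0$. This is harmless — the loop's exit condition holds once $c_+\ge\bar c$, so $c_+\le\rho\bar c$ — but it should be said.
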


We divide the proof of this theorem into two lemmas. At first, we show that the validity of linearised Slater's
condition at $x_*$ implies that for any sufficiently large $n \in \mathbb{N}$ the penalty function 
$Q_c(\cdot, x_n, V_n)$ is \textit{exact} for the corresponding linearised problem (i.e. its points of global minimum
on the set $A$ are feasible for the linearised problem). Then we will use this auxiliary result to prove the statement
of the theorem.

\begin{lemma} \label{lem:ExactPenalty}
Under the assumptions of Theorem~\ref{thrm:SteeringExPen_SlaterLimit} there exists $c_* > 0$ and a neighbourhood
$\mathcal{U}(x_*)$ of $x_*$ such that for all $c \ge c_*$ and $x \in \mathcal{U}(x_*)$ and for any 
$V = (v_0, \ldots, v_{\ell})$ with $v_k \in \partial h_k(x)$, $k \in \mathcal{I} \cup \{ 0 \}$, globally optimal
solutions $z_c(x, V)$ of the problem
\[
  \minimise_z \enspace Q_c(z, x, V) \quad \text{subject to} \quad z \in A 
\]
are feasible for the corresponding linearised problem, i.e., $\Gamma(z_c(x, V), x, V) = 0$ (in other words, 
the penalty function $Q_c$ for the corresponding linearised problem is \textit{exact}). 
\end{lemma}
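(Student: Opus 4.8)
The plan is to read the claim as a \emph{uniform} exactness statement for the $\ell_1$ penalty functions of the linearised convex programs and to reduce it to the classical fact that, for a convex program obeying Slater's condition, its $\ell_1$ penalty is exact as soon as the penalty parameter exceeds the largest optimal Lagrange multiplier, the latter being controlled by the ratio (objective gap)/(Slater margin). Since $\mathcal{E} = \emptyset$, writing $\omega_{x,V}(z) = g_0(z) - \langle v_0, z - x\rangle$ and $G_i(z,x,V) = g_i(z) - h_i(x) - \langle v_i, z - x\rangle$, one has $Q_c(z, x, V) = \omega_{x,V}(z) + c\sum_{i \in \mathcal{I}} \max\{G_i(z, x, V), 0\} = \omega_{x,V}(z) + c\,\Gamma(z, x, V)$, i.e. $Q_c(\cdot, x, V)$ is precisely the $\ell_1$ penalty of the convex program $\min \omega_{x,V}$ over $\{z \in A : G_i(z, x, V) \le 0,\ i \in \mathcal{I}\}$. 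It thus suffices to produce, uniformly over $x$ in some ball $B(x_*, r)$ and over all admissible $V$, a Slater point with a margin bounded below by a positive constant, an upper bound on $\omega_{x,V}$ at that point, and a lower bound on the linearised optimal value; with these in hand I would take $c_*$ to be one plus the resulting uniform multiplier bound.

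Step~A (uniform Slater margin) is the technical heart and the main obstacle, precisely because the linearised Slater condition only supplies a Slater point \emph{depending} on the subgradient vector, whereas I need a margin uniform over all admissible subgradient choices. I would first work at $x_*$: the set $K = \prod_{i \in \mathcal{I}} \partial h_i(x_*)$ is compact, and for each $v \in K$ the condition furnishes $y(v) \in A$ and $\delta(v) > 0$ with $\max_{i \in \mathcal I} G_i(y(v), x_*, v) \le -\delta(v)$. As $v' \mapsto \max_{i \in \mathcal I} G_i(y(v), x_*, v')$ is continuous, $y(v)$ keeps a margin $\delta(v)/2$ throughout an open neighbourhood $N_v$ of $v$; a finite subcover of $K$ then yields a \emph{finite} set $Y = \{\bar y_1, \ldots, \bar y_p\} \subset A$ and $\delta_1 > 0$ such that every $v \in K$ is handled, with margin $\delta_1$, by some $\bar y_j$. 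To pass to $x$ near $x_*$ I would use upper semicontinuity and local boundedness of the subdifferential mappings, which let me write any $v_i \in \partial h_i(x)$ as $\tilde v_i + e_i$ with $\tilde v_i \in \partial h_i(x_*)$ and $\|e_i\| \le \varepsilon$; a routine estimate based on the local Lipschitz continuity of the $h_i$, the boundedness of the subgradients and of $Y$, and the bounds $\|x - x_*\| \le r$, $\|e_i\| \le \varepsilon$ shows that $G_i(\bar y_j, x, V)$ differs from $G_i(\bar y_j, x_*, \tilde v)$ by at most $\delta_1/2$ once $r, \varepsilon$ are small. Hence for every admissible $(x, V)$ some $\bar y_j \in Y$ satisfies $G_i(\bar y_j, x, V) \le -\delta_0$ for all $i \in \mathcal I$, where $\delta_0 := \delta_1/2 > 0$; in particular $\Gamma(\bar y_j, x, V) = 0$.

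Step~B (uniform objective bounds). The upper bound is immediate, $\omega_{x,V}(\bar y_j) = g_0(\bar y_j) - \langle v_0, \bar y_j - x\rangle \le U$ uniformly, because $Y$ is finite, $g_0$ is continuous, and $v_0 \in \partial h_0(x)$ stays bounded for $x \in B(x_*, r)$. For the lower bound I would use that the linearised feasible region lies inside the feasible region of $(\mathcal{P})$: since $v_i \in \partial h_i(x)$ gives $h_i(z) \ge h_i(x) + \langle v_i, z - x\rangle$, one has $G_i(z, x, V) \ge f_i(z)$, so $G_i(z, x, V) \le 0$ forces $f_i(z) \le 0$; the same inequality for $h_0$ yields $\omega_{x,V}(z) \ge f_0(z) + h_0(x)$. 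As $f_0$ is coercive on the closed feasible set it is bounded there from below, by some $m$, and $h_0$ is bounded below on $B(x_*, r)$, so the linearised optimal value satisfies $\omega^\ast_{x,V} \ge m + \inf_{B(x_*,r)} h_0 =: L$ uniformly (and is attained, $\omega_{x,V}$ being coercive on the linearised feasible set).

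Step~C (threshold and exactness). By Slater, strong duality holds for each linearised program and optimal multipliers $\lambda^\ast = (\lambda_i^\ast)_{i \in \mathcal{I}} \ge 0$ exist; evaluating the Lagrangian at the Slater point $\bar y_j$ of Step~A gives $\omega^\ast_{x,V} \le \omega_{x,V}(\bar y_j) + \sum_{i \in \mathcal I} \lambda_i^\ast G_i(\bar y_j, x, V) \le U - \delta_0 \sum_{i \in \mathcal I} \lambda_i^\ast$, whence $\sum_{i \in \mathcal I} \lambda_i^\ast \le (U - L)/\delta_0 =: \Lambda$, a bound independent of $(x, V)$. I would then set $c_* = \Lambda + 1$. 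For $c \ge c_*$ and any $z \in A$, since $c > \max_{i \in \mathcal I} \lambda_i^\ast$ and $\lambda_i^\ast \ge 0$, $Q_c(z, x, V) \ge \omega_{x,V}(z) + \sum_{i \in \mathcal I} \lambda_i^\ast G_i(z, x, V) \ge \omega^\ast_{x,V}$, the first inequality being strict whenever $\Gamma(z, x, V) > 0$. A linearised-optimal point attains $\omega^\ast_{x,V}$ with $\Gamma = 0$, so $\min_{z \in A} Q_c(z, x, V) = \omega^\ast_{x,V}$ and no infeasible point can be a minimiser; therefore every optimal solution $z_c(x, V)$ of $\min_{z \in A} Q_c(z, x, V)$ satisfies $\Gamma(z_c(x, V), x, V) = 0$ for all $c \ge c_*$, $x \in B(x_*, r)$ and admissible $V$. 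This is exactly the assertion, with $\mathcal{U}(x_*) = B(x_*, r)$.
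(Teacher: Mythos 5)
Your proof is correct, and while it shares the paper's first and third ingredients, it takes a genuinely different route through the key exactness step. Like the paper, you begin by upgrading the linearised Slater condition at $x_*$ to a \emph{uniform} Slater margin over all $x$ near $x_*$ and all admissible subgradient vectors $V$ (the paper does this with the compactness and outer semicontinuity of the subdifferential mappings, essentially the same finite-cover argument you spell out), and like the paper you exploit the inclusion $\Omega(x,V)\subseteq\Omega$ together with the coercivity of $f_0$ to control the linearised problems uniformly. The divergence is in how exactness is then extracted. The paper stays entirely primal: from the uniform Slater margin it derives a Hoffman-type error bound $\Gamma(z,x,V)\ge(\varkappa/r)\,\dist(z,\Omega(x,V))$, combines it with a uniform Lipschitz constant $L+K$ for the linearised objective on a ball containing all linearised minimisers, and invokes a local exactness principle to get the threshold $c_*=(L+K)r/\varkappa$. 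You instead pass to Lagrangian duality: Slater's condition gives optimal multipliers for each linearised convex program, evaluating the Lagrangian at the uniform Slater point bounds $\sum_i\lambda_i^*$ by $(U-L)/\delta_0$ independently of $(x,V)$, and the classical multiplier-based exact penalty argument then yields exactness for $c>\Lambda$, with strict inequality $Q_c(z,x,V)>\omega^*_{x,V}$ at any infeasible $z$ ruling out infeasible minimisers. Both thresholds are explicit and uniform; the paper's is governed by a Lipschitz constant of the objective and the error-bound modulus, yours by the objective gap over the Slater margin. Your route has the advantage of relying only on textbook convex duality rather than on an external local-exactness proposition, at the cost of needing the uniform two-sided bounds $U$ and $L$ on the linearised objective values (which you correctly supply, and which the paper needs anyway in its Part~3 to bound the linearised minimisers). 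I see no gap: the decomposition $v_i=\tilde v_i+e_i$ is justified by outer semicontinuity, the Slater point $\bar y_j$ is feasible for $(\mathcal{P})$ so $\Omega\ne\emptyset$ and the lower bound $m$ is finite, and attainment of the linearised optimum (or even just the infimum argument) suffices to conclude that no infeasible point can minimise $Q_c(\cdot,x,V)$ on $A$.
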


\begin{proof}
Denote $B(x, r) = \{ y \in \mathbb{R}^d \mid \| y - x \| \le r \}$. We divide the proof of the lemma into three parts.

\textbf{Part~1. The uniform error bound for $\Gamma(z, x, V)$.} Choose any $v_i \in \partial h_i(x_*)$, 
$i \in \mathcal{I}$. By linearised Slater's condition there exist $y \in A$ and $\varkappa > 0$ such that
\[
  g_i(y) - h_i(x_*) - \langle v_i, y - x_* \rangle \le - \varkappa \quad \forall i \in \mathcal{I}.
\]
Therefore there exists $r > 0$ such that 
\[
  g_i(z) - h_i(x) - \langle w_i, z - x \rangle \le - \frac{\varkappa}{2} \quad \forall i \in \mathcal{I}
\]
for all $x \in B(x_*, r)$, $w_i \in B(v_i, r)$, and $z \in B(y, r)$. Hence with the use of the compactness of
subdifferential one can verify that there exist $\varkappa > 0$ and $r > 0$ such that for all 
$v_i \in B(\partial h_i(x_*), r)$, $i \in \mathcal{I}$, one can find $y \in A$ such that for all $z \in B(y, r)$ and 
$x \in B(x_*, r)$ the following inequalities hold true:
\[
  g_i(z) - h_i(x) - \langle v_i, z - x \rangle \le - \varkappa \quad \forall i \in \mathcal{I}.
\]
Here $B(\partial h_i(x_*), r) = \{ w \in \mathbb{R}^d \mid \dist(w, \partial h_i(x_*)) \le r \}$. 

Applying the outer semicontinuity of subdifferential mapping \cite[Crlr.~24.5.1]{Rockafellar} one can find
$\delta > 0$ such that $\partial h_i(x) \subseteq B(\partial h_i(x_*), r)$ for all $x \in B(x_*, \delta)$, which implies
that for all $x \in B(x_*, \delta)$ and $v_i \in \partial h_i(x)$, $i \in \mathcal{I}$, one can find $y(x, V) \in A$
such that 
\begin{equation} \label{eq:UniformSlater}
  g_i(z) - h_i(x) - \langle v_i, z - x \rangle \le - \varkappa \quad \forall i \in \mathcal{I} 
  \quad \forall z \in B(y(x, V), r).
\end{equation}
For any $x \in A$ and $v_k \in \partial h_k(x)$, $k \in \{ 0 \} \cup \mathcal{I}$, denote by
\[
  \Omega(x, V) = \Big\{ z \in A \Bigm| g_i(z) - h_i(x) - \langle v_i, z - x \rangle \le 0 \Big\}
\]
the feasible region of the corresponding linearised problem. Note that this set is nonempty, if $x \in B(x_*, \delta)$,
by the virtue of \eqref{eq:UniformSlater}. 

Fix any $x \in B(x_*, \delta)$, as well as any $v_k \in \partial h_k(x)$, $k \in \mathcal{I}$, and introduce the
functions $\omega_i(z, x, V) := g_i(z) - h_i(x) - \langle v_i, z - x \rangle$. By the definition of the set 
$\Omega(x, V)$ for any $z \in A \setminus \Omega(x, V)$ there exists $i \in \mathcal{I}$ such that 
$\omega_i(z, x, V) > 0$, which due to \eqref{eq:UniformSlater} implies that $\| z - y(x, V) \| \ge r$. For any such $i$
one has
\[
  \omega_i(\alpha z + (1 - \alpha) y(x, V), x, V) 
  \le \alpha \omega_i(z, x, V) + (1 - \alpha) \omega_i(y(x, V), x, V) \le 0
\]
for all $\alpha > 0$ satisfying the inequality
\[
  \alpha \le \frac{-\omega_i(y(x, V), x, V)}{\omega_i(z, x, V) - \omega_i(y(x, V), x, V)}
\]
(note that $\omega_i(y(x, V), x, V) < 0$ due to \eqref{eq:UniformSlater}, which implies that $\alpha \in (0, 1)$).
Therefore $\alpha z + (1 - \alpha) y(x, V) \in \Omega(x, V)$ for all $\alpha \in (0, \alpha_*]$, where
\[
  \alpha_* = \min\left\{ \frac{-\omega_i(y(x, V), x, V)}{\omega_i(z, x, V) - \omega_i(y(x, V), x, V)} 
  \biggm| i \in \mathcal{I} \colon \omega_i(z, x, V) > 0 \right\} > 0.
\]
Hence applying inequalities \eqref{eq:UniformSlater} and $\| z - y(x, V) \| \ge r$ one obtains that
\begin{multline*}
  \dist\big( z, \Omega(x, V) \big) \le \big\| z - \big[\alpha_* z + (1 - \alpha_*) y(x, V) \big] \big\|
  = \| z - y(x, V) \| (1 - \alpha_*) 
  \\
  \le r \max_{i \in \{ k \in \mathcal{I} \colon \omega_k(z, x, V) > 0 \}} 
  \frac{\omega_i(z, x, V)}{\omega_i(z, x, V) - \omega_i(y(x, V), x, V)}
  \le \frac{r}{\varkappa} \max_{i \in \mathcal{I}} \omega_i(z, x, V).
\end{multline*}
for any $z \in A \setminus \Omega(x, V)$. Consequently, the inequalities
\begin{equation} \label{eq:UniformErrorBound}
\begin{split}
  \Gamma(z, x, V) &= \sum_{i \in \mathcal{I}} \max\big\{ 0, g_i(z) - h_i(x) - \langle v_i, z - x \rangle \big\}
  \\
  &\ge \max_{i \in \mathcal{I}} \omega_i(z, x, V) 
  \ge \frac{\varkappa}{r} \dist\big( z, \Omega(x, V) \big)
\end{split}
\end{equation}
hold true for all $z \in A$, $x \in B(x_*, \delta)$, and $V = (v_0, v_1, \ldots, v_{\ell})$, $v_k \in \partial h_k(x)$
(here we used the fact that for any $z \in \Omega(x, V)$ one has $\Gamma(z, x, V) = 0$).

\textbf{Part~2. The exactness of the penalty function $Q_c(\cdot, x, V)$.} Suppose now that there exists $R > 0$ such
that for any $x \in B(x_*, \delta)$ and $V = (v_0, v_1, \ldots, v_{\ell})$, where $v_k \in \partial h_k(x)$, globally
optimal solutions of the linearised problem
\begin{equation} \label{prob:LinearizedNearSlaterPoint}
\begin{split}
  &\minimise_z \enspace g_0(z) - \langle v_0, z - x \rangle 
  \\
  &\text{subject to} \enspace g_i(z) - h_i(x) - \langle v_i, z - x \rangle \le 0, \quad i \in \mathcal{I}, 
  \quad z \in A
\end{split}
\end{equation}
exist and lie within the ball $B(0, R)$. 

The function $g_0$ is Lipschitz continuous with constant $L > 0$ on $B(0, R + 1)$ by \cite[Thm.~10.4]{Rockafellar},
which implies that the objective function of problem \eqref{prob:LinearizedNearSlaterPoint} is Lipschitz continuous on
$B(0, R + 1)$ with constant $L + K$, where 
\[
  K = \sup\Big\{ \| v_0 \| \Big| v_0 \in \partial h_0(x), \: x \in B(x_*, \delta) \Big\} < + \infty.
\] 
Therefore by \cite[Prp.~2.7]{Dolgopolik_ExactPenalty} for any globally optimal solution $z_*$ of problem
\eqref{prob:LinearizedNearSlaterPoint} there exists $\beta > 0$ such that for all $z \in B(z_*, \beta) \cap A$ one has
\[  
  g_0(z) - \langle v_0, z - x \rangle 
  \ge g_0(z_*) - \langle v_0, z_* - x \rangle - (L + K) \dist(z, \Omega(x, V)).
\]
Hence bearing in mind inequality \eqref{eq:UniformErrorBound} one obtains that
\begin{align*}
  Q_c(z, x, V) &= g_0(z) - \langle v_0, z - x \rangle + c \Gamma(z, x, V) 
  \\
  &\ge g_0(z_*) - \langle v_0, z_* - x \rangle - (L + K) \dist(z, \Omega(x, V)) 
  + c \frac{\varkappa}{r} \dist\big( z, \Omega(x, V) \big) 
  \\
  &\ge g_0(z_*) - \langle v_0, z_* - x \rangle = Q_c(z_*, x, V)
\end{align*}
for all $z \in B(z_*, \beta) \cap A$ and $c \ge c_* := (L + K)r / \varkappa$. Thus, for any such $c$ the point $z_*$
is a local minimiser of the function $Q_c(\cdot, x, V)$ on the set $A$. Since this function is convex, $z_*$ is a point
of global minimum of $Q_c(\cdot, x, V)$ on the set $A$ for any $c \ge c_*$ and for all $x \in B(x_*, \delta)$ and 
$V = (v_0, v_1, \ldots, v_{\ell})$, $v_k \in \partial h_k(x)$. Consequently, if $c > c_*$ and $z(c)$ is a point of
global minimum of $Q_c(\cdot, x, V)$ on the set $A$, then $\Gamma(z(c), x, V) = 0$, since otherwise
\[
  Q_c(z_*, x, V) = Q_c(z(c), x, V) > Q_{c_*}(z(c), x, V) \ge Q_{c_*}(z_*, x, V), 
\]
which is impossible. Thus, we found a neighbourhood $B(x_*, \delta)$ of $x_*$ and $c_* > 0$ such that for any $c > c_*$,
$x \in B(x_*, \delta)$, and $V = (v_0, v_1, \ldots, v_{\ell})$, $v_k \in \partial h_k(x)$, points of global minimum
$z_c(x, V)$ of the function $Q_c(\cdot, x, V)$ on the set $A$ are feasible for problem 
\eqref{prob:LinearizedNearSlaterPoint} or, equivalently, $\Gamma(z_c(x, V), x, V) = 0$, which was precisely our goal.

\textbf{Part~3. The boundedness of globally optimal solutions.} To finish the proof of the lemma, it remains to show
that globally optimal solutions of problem \eqref{prob:LinearizedNearSlaterPoint} exist and lie within some ball.
Indeed, fix any $x \in B(x_*, \delta)$ and $V = (v_0, v_1, \ldots, v_{\ell})$ with $v_k \in \partial h_k(x)$, and
denote by $\Omega$ the feasible region of the problem $(\mathcal{P})$. It is easily seen that 
$\Omega(x, V) \subseteq \Omega$.

Introduce the set
\[
  \mathcal{S}(x, V) = \Big\{ z \in \Omega(x, V) \Bigm| g_0(z) - \langle v_0, z - x \rangle \le g_0(x) \Big\}.
\]
This set is not empty, since $x \in \mathcal{S}(x, V)$. Furthermore, with the use of the inequality
$h_0(y) - h_0(x) \ge \langle v_0, y - x \rangle$ one can readily check that $\mathcal{S}(x, V)$ is contained in 
the set
\[
  \Big\{ y \in \Omega \Bigm| f_0(y) \le f_0(x) \Big\} 
  \subseteq \Big\{ y \in \Omega \Bigm| f_0(y) \le \sup_{x \in B(x_*, \delta)} f_0(x) \Big\}.
\]
The last set is bounded due to our assumption that the function $f_0$ is coercive on the set $A$. Thus, the set
$\mathcal{S}(x, V)$ is compact. Consequently, by the standard compactness argument for any $x \in B(x_*, \delta)$ and
for all $V = (v_0, v_1, \ldots, v_{\ell})$ with $v_k \in \partial h_k(x)$, $k \in \mathcal{I} \cup \{ 0 \}$, globally
optimal solutions of problem \eqref{prob:LinearizedNearSlaterPoint} exist and lie within the bounded set
$\{ y \in \Omega \mid f_0(y) \le \sup_{x \in B(x_*, \delta)} f_0(x) \}$.
\end{proof}

\begin{lemma}
Let the assumptions of Theorem~\ref{thrm:SteeringExPen_SlaterLimit} be valid. Then the point $x_*$ is feasible and
critical for the problem $(\mathcal{P})$, and the corresponding sequence of penalty parameters $\{ c_n \}$ is bounded.
\end{lemma}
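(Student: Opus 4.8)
The plan is to establish the three conclusions in the order: feasibility of $x_*$, boundedness of $\{ c_n \}$, and finally criticality, using the exactness result of Lemma~\ref{lem:ExactPenalty} as the main engine.

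First I would settle feasibility. By Theorem~\ref{thrm:SteeringPenalty_Feasibility_1} the limit $x_*$ is either feasible for the problem $(\mathcal{P})$ or an infeasible critical point of the penalty term $\varphi$, so it suffices to exclude the latter. If $x_*$ were critical for $\varphi$, then by Definition~\ref{def:PenTermCriticality} there would exist $v_i^* \in \partial h_i(x_*)$, $i \in \mathcal{I}$, such that $x_*$ minimises $\Gamma(\cdot, x_*, V^*)$ on $A$. Applying linearised Slater's condition to these subgradients produces $y \in A$ with $g_i(y) - h_i(x_*) - \langle v_i^*, y - x_* \rangle < 0$ for all $i \in \mathcal{I}$, whence $\Gamma(y, x_*, V^*) = 0$ (recall that $\mathcal{E} = \emptyset$). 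Since $\Gamma \ge \varphi \ge 0$ and $\Gamma(x_*, x_*, V^*) = \varphi(x_*)$, minimality forces $\varphi(x_*) = 0$, i.e. $x_*$ is feasible, contradicting infeasibility. Hence $x_*$ is feasible in either case.

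The heart of the argument, and the step I expect to be the main obstacle, is the boundedness of $\{ c_n \}$. By Lemma~\ref{lem:ExactPenalty} there exist $c_* > 0$ and $\delta > 0$ such that for every $x \in B(x_*, \delta)$ and admissible $V$ the penalty function $Q_c(\cdot, x, V)$ is exact whenever $c \ge c_*$; since $x_n \to x_*$, I fix $n_1$ with $x_n \in B(x_*, \delta)$ for all $n \ge n_1$. Writing $\omega(z) = g_0(z) - \langle v_{n0}, z - x_n \rangle$, so that $Q_c(\cdot, x_n, V_n) = \omega(\cdot) + c \Gamma(\cdot, x_n, V_n)$, exactness gives, for $n \ge n_1$ and $c_n \ge c_*$, the equality $\Gamma(x_n(c_n), x_n, V_n) = 0$ already at Step~1, so the method proceeds directly to Step~4 without executing Steps~2 and~3. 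Moreover, any global minimiser $x_n(c)$ with $c \ge c_*$ attains the common value $\omega(x_n(c)) = \min\{ \omega(z) \mid z \in A, \ \Gamma(z, x_n, V_n) = 0 \}$, independent of $c \ge c_*$. Comparing $Q_{c_*}(x_n(c_*), x_n, V_n) \le Q_{c_*}(x_n, x_n, V_n)$ and using $\Gamma(x_n(c_*), x_n, V_n) = 0$ and $\Gamma(x_n, x_n, V_n) = \varphi(x_n)$ yields the uniform estimate $\omega(x_n(c)) - \omega(x_n) \le c_* \varphi(x_n)$ for all $c \ge c_*$.

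It then remains to show that Step~4 does not force a further increase once $c_n$ is large. Substituting $\Gamma(x_n(c_n), x_n, V_n) = 0$ into inequality \eqref{eq:FeasReduction_PenReduction_Updated} with $c_{n + 1} = c_n$ reduces it to the inequality $\omega(x_n(c_n)) - \omega(x_n) \le (1 - \eta_2) c_n \varphi(x_n)$, which the uniform estimate guarantees as soon as $c_n \ge c_* / (1 - \eta_2)$ (the case $\varphi(x_n) = 0$ being immediate). Thus, once $n \ge n_1$ and $c_n \ge c_* / (1 - \eta_2)$, the convention of not increasing the penalty unnecessarily yields $c_{n + 1} = c_n$. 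Since $\{ c_n \}$ is nondecreasing, if it were unbounded it would reach a value $\ge c_* / (1 - \eta_2)$ at some index $N \ge n_1$ and then remain constant for all $n \ge N$ by induction, a contradiction; hence $\{ c_n \}$ is bounded and converges to some $\widehat{c} > 0$ with $c_n = \widehat{c}$ for all large $n$. Finally, criticality follows by a limiting argument: for all large $n$ the point $x_{n + 1}$ is a global minimiser of $Q_{\widehat{c}}(\cdot, x_n, V_n)$ on $A$, so $Q_{\widehat{c}}(x_{n + 1}, x_n, V_n) \le Q_{\widehat{c}}(y, x_n, V_n)$ for every $y \in A$. The subgradients $V_n$ are bounded by \cite[Thm.~24.7]{Rockafellar}; passing to a subsequence along which $V_n \to V^*$ with $v_k^* \in \partial h_k(x_*)$ (by closedness of the subdifferential) and using $x_n, x_{n + 1} \to x_*$ together with continuity of $Q_{\widehat{c}}$, I pass to the limit to obtain $Q_{\widehat{c}}(x_*, x_*, V^*) \le Q_{\widehat{c}}(y, x_*, V^*)$ for all $y \in A$. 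Hence $x_*$ is a generalised critical point of $(\mathcal{P})$ for $\widehat{c}$, and since $x_*$ is feasible, Remark~\ref{rmrk:GeneralizedCriticality} shows it is critical for $(\mathcal{P})$.
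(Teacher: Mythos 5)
Your proof is correct and follows essentially the same route as the paper: Lemma~\ref{lem:ExactPenalty} forces $\Gamma(x_n(c_n), x_n, V_n) = 0$ once $x_n$ is near $x_*$ and $c_n \ge c_*$, so Steps 2--3 are skipped and the Step-4 test passes without further increases of the penalty parameter; feasibility of $x_*$ comes from Theorem~\ref{thrm:SteeringPenalty_Feasibility_1} combined with linearised Slater's condition; and criticality then follows. The only (harmless) deviations are that you verify inequality \eqref{eq:FeasReduction_PenReduction_Updated} via the quantitative bound $\omega(x_n(c_n)) - \omega(x_n) \le c_* \varphi(x_n)$ with the threshold $c_*/(1 - \eta_2)$, whereas the paper observes that $\varphi(x_n) = 0$ from iteration $n_0 + 1$ onward so that both sides of that inequality collapse to zero, and that you re-derive generalised criticality by a direct passage to the limit instead of citing Theorem~\ref{thrm:SteeringExPen_Criticality}.
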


\begin{proof}
Arguing by reductio ad absurdum, suppose that the sequence of penalty parameters $\{ c_n \}$ increases
unboundedly. Then there exists $n_0 \in \mathbb{N}$ such that for all $n \ge n_0$ one has $c_n \ge c_*$ and 
$x_n \in \mathcal{U}(x_*)$, where $c_* > 0$ and $\mathcal{U}(x_*)$ are from Lemma~\ref{lem:ExactPenalty}.
Therefore, by Lemma~\ref{lem:ExactPenalty} one has $\Gamma(x_n(c_n), x_n, V_n) = 0$ for all $n \ge n_0$, which implies
that Algorithmic Pattern~\ref{alg:SteeringPenalty} does not execute Steps 2 and 3 for all $n \ge n_0$. Moreover,
$\Gamma(x_{n + 1}, x_n, V_n) = 0$ for all $n \ge n_0$ by virtue of Lemma~\ref{lem:PenTerm_Obj_Monotone} and the fact
that $c_{n + 1} \ge c_n$. Consequently, for any $n \ge n_0 + 1$ one has
\[
  Q_{c_n}(x_n(c_n), x_n, V_n) - Q_{c_n}(x_n, x_n, V_n) \le 0
  = c_n \eta_2 \Big[ \Gamma(x_n(c_n), x_n, V_n) - \Gamma(x_n, x_n, V_n) \Big],
\]
i.e. the inequality on Step~4 of Algorithmic Pattern~\ref{alg:SteeringPenalty} is satisfied for $c_{n + 1} = c_n$. Hence
$c_{n + 1} = c_n$ for all $n \ge n_0 + 1$, which contradicts our assumption. Thus, the sequence $\{ c_n \}$ is bounded.

It remains to show that the point $x_*$ is feasible for the problem $(\mathcal{P})$. Then applying
Theorem~\ref{thrm:SteeringExPen_Criticality} one can conclude that the point $x_*$ is critical for this problem. 

Arguing by reductio ad absurdum, suppose that the point $x_*$ is infeasible. Let us show that under this assumption the
point $x_*$ is not critical for the penalty term $\varphi$, which contradicts 
Theorem~\ref{thrm:SteeringPenalty_Feasibility_1}.  

Indeed, fix any $V = (v_0, v_1, \ldots, v_{\ell})$, where $v_k \in \partial h_k(x_*)$, $k \in \mathcal{I} \cup \{ 0 \}$.
Observe that for any $i \in \mathcal{I}$ such that $f_i(x_*) = g_i(x_*) - h_i(x_*) \le 0$ one has
\[
  \max\big\{ 0, g_i(y) - h_i(x_*) - \langle v_i, y - x_* \rangle \big\} = 0
  = \max\big\{ 0, f_i(x_*) \}
\]
(here $y \in A$ is from linearised Slater's condition), while for any $i \in \mathcal{I}$ such that $f_i(x_*) > 0$ (note
that there is at least one such $i$, since $x_*$ is infeasible) one has
\[
  \max\big\{ 0, g_i(y) - h_i(x_*) - \langle v_i, y - x_* \rangle \big\} = 0
  < \max\big\{ 0, f_i(x_*) \}.
\]
Summing up these equalities and inequalities one obtains that
\[
  \Gamma(y, x_*, V) < \Gamma(x_*, x_*, V),
\]
that is, the point $x_*$ is not a point of global minimum of the function $\Gamma(\cdot, x_*, V)$ on the set $A$. Since
the subgradients $V$ were chosen arbitrarily, one can conclude that $x_*$ is not a critical point of the penalty term
$\varphi$ (see~Def.~\ref{def:PenTermCriticality}), and the proof is complete.
\end{proof}

\begin{corollary}
Let $\mathcal{E} = \emptyset$, the function $f_0$ be coercive on the feasible set of the problem $(\mathcal{P})$, and
$\{ x_n \}$ be the sequence generated by Algorithmic Pattern~\ref{alg:SteeringPenalty}. Suppose that this sequence
converges to a point $x_*$ and the corresponding sequence of the penalty parameters $\{ c_n \}$ increases unboundedly.
Then linearised Slater's condition does not hold true at $x_*$. 
\end{corollary}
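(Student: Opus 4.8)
The plan is to recognise that this corollary is precisely the contrapositive of Theorem~\ref{thrm:SteeringExPen_SlaterLimit}, so that no new work is required beyond invoking that theorem. I would argue by contradiction. Suppose, contrary to the claim, that linearised Slater's condition \emph{does} hold at the limit point $x_*$. The remaining hypotheses of the corollary---namely $\mathcal{E} = \emptyset$, coercivity of $f_0$ on the feasible set of $(\mathcal{P})$, and convergence of the sequence $\{ x_n \}$ generated by Algorithmic Pattern~\ref{alg:SteeringPenalty} to $x_*$---are identical to those of Theorem~\ref{thrm:SteeringExPen_SlaterLimit}. Thus, under this supposition, all hypotheses of that theorem are met.

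Applying Theorem~\ref{thrm:SteeringExPen_SlaterLimit}, I would then conclude directly that the corresponding sequence of penalty parameters $\{ c_n \}$ is bounded. This contradicts the standing assumption of the corollary that $\{ c_n \}$ increases unboundedly. Therefore the supposition is false, and linearised Slater's condition cannot hold at $x_*$, which is exactly the assertion of the corollary.

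There is essentially no genuine obstacle to overcome here: the entire substance of the result is already carried by Theorem~\ref{thrm:SteeringExPen_SlaterLimit} (and, through it, by Lemma~\ref{lem:ExactPenalty}, whose uniform error bound and exactness arguments form the technical core). The only point deserving a moment's care is to confirm that the hypothesis sets of the theorem and the corollary coincide once the Slater assumption is set aside, and that the boundedness conclusion alone---rather than the accompanying feasibility and criticality conclusions---suffices to derive the contradiction. Both checks are immediate, so the proof reduces to the single sentence that the statement is the contrapositive of the preceding theorem.
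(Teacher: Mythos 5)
Your proposal is correct and matches the paper's (implicit) reasoning exactly: the corollary is stated without proof precisely because it is the contrapositive of Theorem~\ref{thrm:SteeringExPen_SlaterLimit}, and your contradiction argument is just that contrapositive spelled out. Nothing further is needed.
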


\section{Numerical experiments}
\label{sect:NumericalExamples}

Let us present two numerical examples illustrating the performance of the steering exact penalty DCA. In both examples,
the initial value of the penalty parameter was chosen as $c_0 = 10$, and the penalty parameter $c_n$ was increased by
the factor $\rho = 10$, each time the corresponding inequality was not satisfied. Parameters of Algorithmic
Pattern~\ref{alg:SteeringPenalty} were chosen as follows: $\eta_1 = \eta_2 = 0.1$ and $\varepsilon_{feas} = 0.01$.

The two following inequalities
\[
  \varphi(x_{n + 1}) < 10^{-3}, \quad \Phi_{c_n}(x_{n + 1}) - \Phi_{c_n}(x_n) < 10^{-3}.
\]
were used as the termination criterion. Finally, all convex optimisation subproblems were solved with the use of
\texttt{cvx}, a {\sc Matlab} package for specifying and solving convex programmes \cite{CVXPackage,GrantBoyd_CVX}.

\begin{example}
Let us consider the nonsmooth discrete-time optimal control problem from \cite[Example~2.1]{Outrata88}, arising as a
simple production model in economics. The problem is formulated as the problem of minimising the cost function
\[
  \enspace \sum_{i = 1}^{k - 1} e^{-\vartheta i} \Big[ - p(i) \min\{ z(i) + u(i), v(i) \} 
  + H(u(i))
  + \beta \max\{ 0, v(i) - z(i) - u(i) \} + \rho z(i) \Big]
\]
subject to 
\begin{align*}
  &z(i + 1) = z(i) + u(i) - \min\{ z(i) + u(i), v(i) \}, \quad z(0) = z_0, 
  \\
  &0 \le u(i) \le b(i) \quad \forall i \in \{ 0, 1, \ldots, k - 1 \}.
\end{align*}
Here $k \in \mathbb{N}$ is a given finite horizon, $z(i) \in [0, + \infty)$ is the amount of products in the stock at
time $i$, $u(i)$ is the production in the time interval $[i, i + 1)$, $v(i)$ is the planned output (supply) at the time
$i$, $p(i)$ is the unit selling price at the time $i$, while the function $H \colon \mathbb{R} \to [0, + \infty)$
represent the production cost. In our numerical experiments we set $H(u) = 0.5 u^2$. The parameter $\vartheta > 0$ is
the interest rate, $\beta > 0$ is the unit penalisation for the case when the amount of available products is less
than the planned output, while $\rho > 0$ is the storage cost.

The problem consists in finding a sequence of ``control inputs'' $u(0), u(1), \ldots, u(k - 1)$ that maximises the total
profit of the factory with respect to the given sequences of outputs $\{ v(i) \}$ and prices $\{ p(i) \}$. The
objective function of this problem is convex, while the nonlinear dynamic constraints can obviously be rewritten as 
DC equality constraints of the form $f_j(z, u) = g_j(z, u) - h_j(z, u) = 0$ with
\[
  g_j(z, u) = 0, \quad h_j(z, u) =  - z(j + 1) + z(j) + u(j) + \max\{ - z(j) - u(j), - v(j) \}.
\] 
The bounds $0 \le u(i) \le b(i)$ on control inputs were included into the nonfunctional constraint $(z, u) \in A$ in our
numerical experiments (i.e. they were not included into the penalty function).

Since no values for parameters of the problem were provided in \cite{Outrata88}, we chose or generated them randomly. 
We used the following values of parameters for our experiments: $\vartheta = 0.01$, $\beta = 5$, $\rho = 0.5$, and 
$z_0 = 0$. The sequences of prices $\{ p(i) \}$, outputs $\{ v(i) \}$, and control bounds $\{ b(i) \}$ were generated
with the use of a random number generator with uniform distribution in the intervals $[10, 15]$, $[5, 15]$, and 
$[5, 15]$, respectively. We also set $k = 1000$, which corresponds to the DC optimisation problem of dimension 
$d = 1999$ with $999$ DC equality constraints (note that $z(0)$ is fixed, while $z(k)$ is redundant for the optimisation
problem).

Numerical experiments showed that the problem under consideration has multiple critical points that are not globally
optimal. Therefore, we solved the problem 10 times using 10 different randomly generated starting points satisfying 
the inequalities $0 \le u(i) \le 15$ and $0 \le z(i) \le 15$. All starting points were infeasible.

In all 10 cases, the algorithm terminated after finding a feasible critical point. The number of iterations before
termination was between 7 and 9, depending on the starting point, while the average computation time was 11 min and
40 s. In all cases the penalty parameter was increased only once to ensure sufficient decay of the infeasibility
measure $\Gamma(x_n(c), x_n, V_n)$ on Step 3 of the penultimate iteration. After this, the method resolved penalised
subproblem \eqref{prob:StPen_PenaltySubproblem} for the increased value of the penalty parameter $c = 100$, whose
solution turned out to be a feasible critical point. The last iteration was needed to check the criticality of this
point. Thus, if $n$ is the number of iterations before termination, the method solved $n + 1$ convex penalised
subproblems \eqref{prob:StPen_PenaltySubproblem}, and $n - 1$ optimal feasibility subproblems
\eqref{prob:StPen_FeasibilitySubproblem} for all randomly generated starting points.
\end{example}

\begin{example}
Let us now consider a slightly simplified version of the nonsmooth optimal control problem from \cite{Outrata83} defined
as follows:
\begin{align*}
  &\minimise_{(u, x, y)} \enspace J(u, x, y) = \int_0^T y(t) \max\{ 0, u(t) \} \, dt
  \\
  &\text{subject to} \quad \dot{x}(t) = y(t),
  \quad
  \dot{y}(t) = \frac{1}{m} u(t) - P y(t) |y(t)| - Q y(t), \quad t \in [0, T],
  \\
  &x(0) = y(0) = 0, \quad x(T) = s, \quad y(T) = 0, 
  \quad
  \underline{u} \le u(t) \le \overline{u} \quad t \in [0, T].
\end{align*}
The problem consists in driving a train from point $0$ to point $s > 0$ and stopping there in time $T > 0$. Here $x(t)$
is the position of the train at time $t$, $y(t)$ is its speed, $m$ is the mass of the train, while parameters $P > 0$
and $Q > 0$ describe its dynamic behaviour. For the sake of simplicity we excluded discontinuous pointwise state
constraints given in \cite{Outrata83}.

To apply the method developed in this paper, the problem was discretised in time. Suppose that the time interval 
$[0, T]$ is divided into $k \in \mathbb{N}$ subintervals of equal length $\Delta = T / k$. Denote
\[
  u(i) = u(\Delta i), \quad x(i) = x(\Delta i), \quad y(i) = y(\Delta i), \quad \forall i \in \{ 0, 1, \ldots, k \}.
\]
Then we obtain the following discretised optimal control problem:
\begin{align*}
  &\minimise_{(u, x, y)} \enspace \sum_{i = 1}^{k - 1} y(i) \max\{ 0, u(i) \}
  \\
  &\text{subject to} \quad x(i + 1) - x(i) = \Delta y(i), 
  \\
  &y(i + 1) - y(i) = \frac{\Delta}{m} u(t) - \Delta P y(i) |y(i)| - \Delta Q y(i), 
  \quad i \in \{ 0, \ldots, k - 1 \}.
  \\
  &x(0) = y(0) = 0, \quad x(k) = s, \quad y(k) = 0, \quad
  \underline{u} \le u(i) \le \overline{u} \quad i \in \{ 0, \ldots, k - 1 \}.
\end{align*}
All linear constraints were included into the nonfunctional convex constraint $(u, x, y) \in A$ (i.e. they were not
included into the penalty function). DC decompositions of the objective function and nonlinear constraints were
constructed with the use of the following equalities
\begin{gather*}
  y [u]_+ = \frac{1}{2} \Big( \big( [y]_+ + [u]_+ \big)^2 + [-y]_+^2 \Big)
  - \frac{1}{2} \Big( \big( [-y]_+ + [u]_+ \big)^2 + [y]_+^2 \Big),
  \\
  y |y| = [y]_+^2 - [- y]_+^2.
\end{gather*}
which can be readily verified directly. Here $[t]_+ = \max\{ 0, t \}$.

In our numerical experiments, we used the same parameters of the problem as given in \cite[Example~5.1]{Outrata83}.
Namely, we set
\[
  \underline{u} = - \frac{2}{3} 10^5, \quad \overline{u} = \frac{2}{3} 10^5, \quad
  P = 0.78 \cdot 10^{-4}, \quad Q = 0.28 \cdot 10^{-3},
\]
and also put $s = 200$ and $T = 48$. Numerical experiments showed that the mass of the train $m = 3 \cdot 10^5$ given
in \cite[Example~5.1]{Outrata83} makes the problem infeasible. Therefore, we set $m = 10^5$ to make sure that the
feasible region of the problem is nonempty. Finally, we chose initial guess $u_0(i) \equiv 0$, $x_0(i) \equiv 0$, and 
$y_0(i) \equiv 0$, and defined $k = 480$, which corresponds to discretisation intervals of length $\Delta = 0.1$
and the DC optimisation problem of dimension $d = 1438$ with $439$ DC equality constraints (note that $x(1) = 0$ and
$y(1) = (\Delta / m) u(0)$, since $y(0) = 0$).

It should be noted that the position of the train $x(i)$ is bounded above by $200$ and the speed of the train,
according to our numerical experiments, lies between $0$ and $6$ even for small values of $k$. In contrast, the control
inputs $u(i)$ (and the objective function) take values proportional to $10^5$. Therefore, to avoid ill-conditioning and
large values of the penalty parameter $c_n$, the control inputs were rescaled as $\widehat{u} = (1/m) u$, which
corresponds to putting $m = 1$, $\underline{u} = - 2/3$, and $\overline{u} = 2/3$ for the original problem. 

\begin{figure}[t!]
\centering
\includegraphics[width=0.7\linewidth]{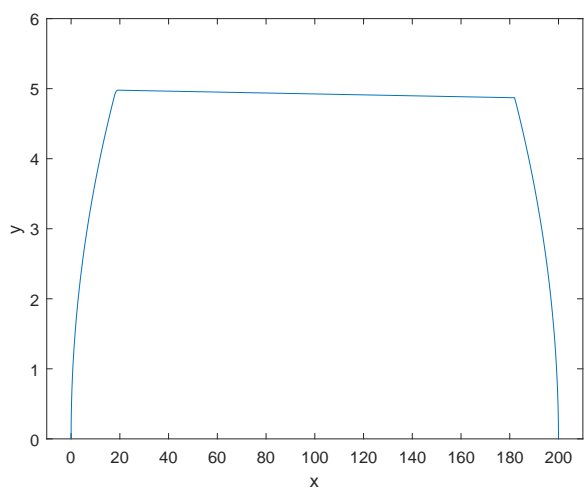}
\caption{The ``optimal'' tachogram.}
\label{fig:Tachogram}
\end{figure}

Let us discuss the results of the numerical experiment. The method terminated after 14 iterations by finding a feasible
critical point of the corresponding discrete optimal control problem. The computation time was 23 min and 39 s.
The ``optimal'' tachogram that assigns to each position of the train the computed ``optimal'' velocity is
given in Figure~\ref{fig:Tachogram}.

As in the previous example, the penalty parameter was increased only once to ensure sufficient decay of the
infeasibility measure on Step 3 of iteration 8. After this, a feasible point was computed and all subsequent iterations
remained feasible. Thus, the method solved 15 convex penalised subproblems \eqref{prob:StPen_PenaltySubproblem} and 8
optimal feasibility subproblems \eqref{prob:StPen_FeasibilitySubproblem}.
\end{example}

%%%
%	Bibliography
%%%

\bibliographystyle{abbrv}  
\bibliography{ExactPenaltyDC_bibl}

\end{document}